\documentclass[11pt]{article}

\usepackage[a4paper,margin=28mm]{geometry}
\usepackage[authoryear,round]{natbib}
\newcommand{\papertitle}{Geometry and observability of latent texture–coherence diffusions in coherent sea-clutter radar observations}

\newcommand{\paperkeywords}{stochastic differential equations; sea clutter; partial observation; diffusion geometry; quadratic variation; observability; radar}

\newcommand{\paperabstract}{%
A coherent radar observes sea clutter through a non-injective multiplicative combination of latent texture and coherence variables. We study a class of partially observed slow--fast diffusions for which the projected quadratic covariance has the form \(Q=S+qyy^\top\), using Field's sea-clutter diffusion as a solvable benchmark. For fixed-shape fast covariance, we derive an exact generalized spectral-gap criterion for recovery of the latent slow coordinate and a stability bound. When the fast covariance changes shape, we obtain an invariant scale--shape decomposition and exact conditions for reinforcement or cancellation of the latent quadratic signature. We then quantify the loss of resolvability caused by finite sampling and receiver noise. Controlled CIR--OU simulations verify the analytic identities and exhibit regimes in which structural observability does not yield stable reconstruction.%
}

\usepackage{amsmath,amssymb,mathtools,bm,amsthm}
\usepackage{booktabs}
\usepackage{graphicx}
\usepackage{xcolor}
\usepackage{hyperref}
\usepackage[nameinlink,noabbrev]{cleveref}

\numberwithin{equation}{section}

\providecommand{\E}{\mathbb{E}}

\providecommand{\Cov}{\operatorname{Cov}}
\providecommand{\Var}{\operatorname{Var}}
\providecommand{\tr}{\operatorname{tr}}
\providecommand{\diag}{\operatorname{diag}}
\providecommand{\SPD}{\operatorname{SPD}}
\providecommand{\Sym}{\operatorname{Sym}}

\providecommand{\dd}{\,\mathrm{d}}

\newif\ifdraftnotes
\draftnotestrue

\newtheorem{theorem}{Theorem}[section]
\newtheorem{proposition}[theorem]{Proposition}
\newtheorem{corollary}[theorem]{Corollary}

\title{\papertitle}
\author{%
Arnaud Coatanhay \quad and \quad Ang\'elique Dr\'emeau\\[0.6em]
\small \textit{Lab-STICC, UMR CNRS 6285, ENSTA, Institut Polytechnique de Paris}\\
\small \textit{2 rue Fran\c{c}ois Verny, 29806 Brest Cedex 9, France}\\[0.3em]
\small \texttt{arnaud.coatanhay@ensta.fr}\quad
\small \texttt{angelique.dremeau@ensta.fr}
}
\date{\today}

\begin{document}
\maketitle

\begin{abstract}
\paperabstract
\end{abstract}

\noindent\textbf{Keywords:} \paperkeywords

\section{Introduction}\label{sec:introduction}

\subsection{Dynamic sea clutter and coherent observation}

Sea-surface backscatter has been a central limitation in maritime radar since the earliest surveillance systems, while the same return also carries information on the ocean surface.  Its characteristics depend strongly on acquisition geometry, grazing angle, carrier frequency, polarization, spatial resolution and sea state \citep{WardBakerWatts1990Maritime,WardToughWatts2013SeaClutter,WattsRosenberg2022Challenges}.  High-resolution measurements further show that the amplitude statistics and correlation structure can depart markedly from simple homogeneous Gaussian models \citep{FarinaGiniGrecoVerrazzani1997HighResolution}, and coherent monostatic/bistatic measurements exhibit geometry- and polarization-dependent texture and Doppler behaviour \citep{RitchieStoveWoodbridgeGriffiths2016NetRAD}.  These observations motivate a stochastic description that keeps the acquisition dependence visible rather than treating ``sea clutter'' as a universal stationary noise law.

A coherent radar does not observe the sea surface directly.  It records a complex electromagnetic return whose amplitude and phase result from the superposition of many scattering contributions inside a resolution cell.  At the signal level, compound and random-walk descriptions have proved particularly useful for representing non-Gaussian fluctuations \citep{WardToughWatts2013SeaClutter}.  The K distribution is a canonical example: it arises from a fluctuating-step random walk and has a long history in scattering experiments \citep{JakemanPusey1978KDistribution}.  Such one-time distributional models describe amplitudes or intensities at a fixed instant, but they do not by themselves determine multi-time moments, transition probabilities, innovations, trajectory likelihoods, or the information carried by a time series about latent dynamical variables.

This distinction becomes operational whenever coherent measurements are combined across time.  Ocean-surface motion can, for example, alter coherent SAR image formation \citep{AlpersBruening1986SAR}; more generally, prediction, synchronization of measurements acquired at different instants, dynamical parameter estimation and coherent target/background discrimination all depend on temporal structure.  Continuous-time diffusion models were introduced into the K-distributed scattering framework by \citet{FieldTough2003Diffusion,FieldTough2003Stochastic} and subsequently specialized to sea clutter by \citet{FieldHaykin2008NonlinearDynamics,Field2008Book}.  The developments of \citet{RousselCoatanhayBaussard2019Parameters,RousselCoatanhayBaussard2019ForwardBackward,RousselCoatanhayBaussard2021CoherentScattering} then exploited this explicit transition structure for inference, parameter estimation and coherent-scattering problems.

Field's construction is particularly attractive because it retains the familiar separation between a slowly varying positive modulation and a rapidly decorrelating complex speckle while assigning a stochastic dynamics to both components.  In its simplest form the coherent reflectivity is factorized as
\begin{equation}
  \Psi_t=\sqrt{x_t}\,\gamma_t,
  \label{eq:intro-field-factorization}
\end{equation}
where the positive process $x_t$ modulates the local return power and the complex process $\gamma_t$ carries the fast coherent fluctuations.  This factorization turns a static compound model into a genuine multi-time stochastic model.

The present paper starts from this dynamical viewpoint, but its objective is not to defend Field's model as a complete microscopic description of the sea surface.  Recent reviews continue to emphasize the difficulty of constructing sea-clutter models that remain robust across sea conditions, viewing geometries, resolutions and coherent processing modes \citep{WattsRosenberg2022Challenges}.  Field's model itself relies on idealizations---independent elementary scatterers, a diffusion description of phase decorrelation, a large-population limit, and no explicit cyclic wave variable---that need not remain valid at all resolutions or sea states.  We therefore use Field's diffusion as a \emph{canonical solvable benchmark}: a model simple enough to permit exact calculations, but rich enough to expose the interaction between latent dynamics, observation and inference.

\subsection{The observation problem}

The main difficulty considered here is created by the observation itself.  Even if the latent state is Markov and its stochastic differential equation is known, the radar does not separately observe texture and speckle.  At a given instant, the mapping
\[
 (x,\gamma)\longmapsto \Psi=\sqrt{x}\,\gamma
\]
is many-to-one.  An observed complex value therefore specifies a fibre of latent states rather than a unique state.  This is not a secondary estimation issue: it changes the structure of the stochastic process seen by the radar.

The earlier sea-clutter work already encountered this difficulty operationally.  Parameter estimation can be carried out straightforwardly when the positive modulation is treated as observable, whereas a practical procedure requires reconstructing latent quantities from the measured reflectivity \citep{RousselCoatanhayBaussard2019Parameters}.  Similarly, forward and backward probabilistic inference is naturally formulated in the latent transition structure even though the measured process is a projection of it \citep{RousselCoatanhayBaussard2019ForwardBackward}.  The coherent-scattering extension further shows that physically meaningful perturbations may enter either as an additive observed component or through a state-dependent coupling \citep{RousselCoatanhayBaussard2021CoherentScattering}.  These developments lead to a structural question: which features of a latent
texture--coherence diffusion remain visible after coherent radar observation, and how can
that visibility be characterized geometrically?  Injectivity of the instantaneous observation
map is not sufficient to answer it.  A continuous trajectory contains additional local information through its drift and, crucially for the present work, through its quadratic covariation.  For a diffusion $X_t$ observed through a smooth map $h$, the projected quadratic tensor
\begin{equation}
  Q=Dh\,a_{\rm lat}\,Dh^\top
  \label{eq:intro-Q}
\end{equation}
is the infinitesimal covariance of the observed increments.  If $Q$ varies along a fibre of $h$, then the projected diffusion symbol does not descend to the observation space.  The same variation may, however, provide a trajectory-level signature of the latent position on that fibre.  This dual role is the central mechanism studied below.

The generic ingredients are not new.  Markov functions and lumpability have a long history \citep{RogersPitman1981MarkovFunctions,Coxson1984LumpabilityObservability,HorstmeyerAtay2016ExactLumpability}; quadratic variation can reveal hidden-state information under singular or signal-dependent observation structures \citep{JoannidesLeGland1997PerfectObservation,CrisanKouritzinXiong2009SignalDependentObservation}; and recent work gives strong identification results for diffusions from imperfect observations \citep{ClarkCrisan2026Identification}.  Our aim is therefore narrower.  We study a specific texture--coherence architecture and quantify how the same projected covariance tensor encodes (i) failure of stochastic reduction, (ii) latent visibility along observation fibres, and (iii) the loss of that visibility when the fast covariance changes shape or when the receiver adds a competing anisotropy.

\subsection{From a radar model to a geometric model class}

To separate what is structural from what is specific to the CIR--OU realization of Field's model, we introduce a slow--fast latent class
\[
  X_t=(R_t,Z_t),
\]
where $R_t$ is a scalar positive or Lamperti-transformed texture coordinate and $Z_t\in\mathbb R^m$ is a fast coherence variable.  The observation has the multiplicative coherent form
\begin{equation}
  Y_t=a(R_t)Z_t.
  \label{eq:intro-general-observation}
\end{equation}
The fast diffusion covariance $D(R)$ is allowed to depend on the slow variable.  This dependence leads to three qualitatively different situations: no coupling, pure scale coupling, and shape-changing coupling.  The distinction is natural both geometrically and physically.  A scalar change of fast variance modifies correlation scales without changing principal directions, whereas a change of shape can alter anisotropy, rotate principal axes, and interfere with the radial signature induced by the multiplicative observation.

Positive-definite matrix geometry provides the appropriate language for this scale--shape split \citep{Moakher2005SPD,JungSchwartzmanGroisser2015ScalingRotation,MostajeranSepulchre2018OrderingSPD}.  We use it here not to introduce a new geometry on $\SPD(m)$, but to formulate an invariant measure of shape variation and to connect that variation to what the observer can or cannot distinguish.

\subsection{Results and scope}

For the fixed-shape class \(D(r)=d(r)D_0\), a single whitening transformation reduces the
projected covariance to an isotropic term plus a rank-one contribution in the observed
direction.  The generalized longitudinal--transverse eigenvalue gap then isolates a scalar
function \(q(r)\) of the latent slow coordinate independently of the fast scale factor.  We
derive an exact recovery criterion and a local stability bound, with Field's CIR--OU model as
an explicit example.

When the fast covariance changes shape, the vertical derivative of the observed quadratic
tensor admits an invariant decomposition into scale and traceless shape components.  The
decomposition gives a lower bound on the surviving quadratic signature and characterizes
exact cancellation by shape variation.  It also shows that a non-zero derivative of the
multiplicative channel does not by itself guarantee local quadratic visibility.  The log-OU
example gives a separate limitation: the same multiplicative observation law need not yield
an informative spectral texture coordinate.

The ideal continuous-time results are then compared with finite-rate acquisition.  Local
covariance estimation introduces a window-length trade-off, receiver noise and possible
instrumental anisotropy.  Pre-averaging and realized-kernel methods provide the relevant
statistical background
\citep{JacodEtAl2009Preaveraging,BarndorffNielsenEtAl2008RealizedKernels,BarndorffNielsenEtAl2011MultivariateKernels,ParkHongLinton2016FourierKernel}.
Controlled simulations show that structural identifiability does not necessarily imply
accurate recovery over a finite observation window.

The analysis is restricted to information carried by the diffusion symbol and its coherent
observation.  Models with the same diffusion covariance but different drifts cannot be
distinguished at this level.  Nor is the positive latent coordinate interpreted as a fundamental
state variable of the sea surface; in a more physical model the effective radar cross section
would depend on an underlying sea state and on the acquisition configuration.  The historical
radar data associated with the earlier programme are not available, so the numerical results
are confined to controlled simulations and misspecification tests.

The remainder of the paper is organized as follows.  \Cref{sec:model-class} introduces the general latent texture--coherence class and fixes the three modelling levels---physical state, reduced stochastic state and radar observation.  \Cref{sec:field} then states the Field benchmark and its time scales.  \Cref{sec:symbol-geometry,sec:shape-geometry} analyze the latent diffusion geometry and the scale--shape structure of the fast covariance.  \Cref{sec:partial-observation} formulates projectability and the vertical quadratic defect.  The two central results are given in \Cref{sec:fixed-shape,sec:shape-changing}.  Finite-sampling and receiver effects are treated in \Cref{sec:resolvability}, followed by controlled numerical tests in \Cref{sec:numerics}.

\section{Latent texture--coherence diffusion class}\label{sec:model-class}

\subsection{Physical, latent and observed levels}

We distinguish the physical sea state from the reduced stochastic variables used for inference.
Let \(S_t\) denote an effective state of the illuminated surface and let \(\mathfrak a\) collect
the acquisition parameters. An effective scattering strength should then be regarded as an
acquisition-dependent response
\begin{equation}
  x_t^{(\mathfrak a)}=\sigma_{\mathfrak a}(S_t),
  \label{eq:physical-to-effective}
\end{equation}
rather than as a universal microscopic coordinate of the sea. The paper does not model
\(S_t\) explicitly.

The reduced latent state is
\begin{equation}
  X_t=(R_t,Z_t),\qquad Z_t\in\mathbb R^m,
  \label{eq:latent-state}
\end{equation}
with a scalar slow variable \(R_t\) and a fast coherence variable \(Z_t\). The coherent
observation is
\begin{equation}
  Y_t=h(R_t,Z_t)=a(R_t)Z_t,\qquad a>0.
  \label{eq:coherent-observation}
\end{equation}
Thus the mathematical problem concerns the map from the reduced latent diffusion to the
coherent observation; the link from the physical sea state to the reduced variables remains
model dependent.

\subsection{Slow--fast diffusion}

Suppose locally that
\begin{equation}
  \dd R_t=b(R_t)\dd t+\sigma(R_t)\dd W_t^0,\qquad \sigma>0.
  \label{eq:slow-original}
\end{equation}
After the Lamperti change \(r=\ell(R)\), \(\ell'=1/\sigma\), we write
\begin{align}
  \dd r_t &= \beta(r_t)\dd t+\dd W_t^0,
  \label{eq:slow-lamperti}\\
  \dd Z_t &= c(r_t,Z_t)\dd t+\Sigma(r_t)\dd W_t,
  \label{eq:fast-general}
\end{align}
where
\begin{equation}
  D(r)=\Sigma(r)\Sigma(r)^\top\in\SPD(m),
  \label{eq:fast-covariance}
\end{equation}
and
\begin{equation}
  \dd[W^0,W]_t=0.
  \label{eq:block-diagonal-noise}
\end{equation}
We assume the regularity required below, in particular \(a\in C^2\) and \(D\in C^1\), and
work where the latent symbol is non-degenerate. No fluctuation--dissipation relation is imposed
between \(c\) and \(D\): the former controls finite-time drift, the latter the instantaneous
covariance.

\subsection{Projected quadratic tensor}

For \(Y_t=a(r_t)Z_t\), It\^o's formula gives the diffusion part
\[
  a'(r)z\,\dd W_t^0+a(r)\Sigma(r)\dd W_t.
\]
On the observation fibre \(y=a(r)z\), the quadratic-covariation density is therefore
\begin{equation}
  Q(r,y)
  =
  a(r)^2D(r)
  +
  \left(\frac{a'(r)}{a(r)}\right)^2yy^\top .
  \label{eq:Q-master-model}
\end{equation}
With
\begin{equation}
  S(r)=a(r)^2D(r),
  \qquad
  q(r)=\bigl(\partial_r\log a(r)\bigr)^2,
  \label{eq:S-q-definitions}
\end{equation}
this becomes
\begin{equation}
  Q(r,y)=S(r)+q(r)yy^\top.
  \label{eq:Q-master}
\end{equation}
In the original coordinate \(R\),
\begin{equation}
  q_R(R)=\sigma(R)^2
  \left(\frac{a_R'(R)}{a(R)}\right)^2.
  \label{eq:q-original-coordinate}
\end{equation}

We call \(D(r)=d(r)D_0\) \emph{fixed-shape} or \emph{scale coupling}; otherwise the
normalized fast covariance changes shape. The main questions are whether \(r\mapsto Q(r,y)\)
separates points of a fixed observation fibre, how well conditioned the inverse is, and how
finite sampling and receiver noise alter this ideal information.

\section{Field sea clutter as a canonical solvable benchmark}\label{sec:field}

\subsection{Texture--speckle dynamics}

The Field model used in the earlier sea-clutter work combines a positive CIR texture with a
complex OU speckle \citep{FieldHaykin2008NonlinearDynamics,Field2008Book}:
\begin{equation}
  \Psi_t=\sqrt{x_t}\,\gamma_t,\qquad
  \gamma_t=\gamma_t^R+i\gamma_t^I,
  \label{eq:field-factorization}
\end{equation}
with independent Brownian drivers and
\begin{align}
 \dd x_t
 &=A(1-x_t)\dd t
   +\sqrt{\frac{2A}{\alpha}x_t}\,\dd W_t^x,
 \label{eq:field-cir}\\
 \dd\gamma_t^R
 &=-\frac B2\gamma_t^R\dd t
   +\sqrt{\frac B2}\,\dd W_t^R,
 \label{eq:field-ou-r}\\
 \dd\gamma_t^I
 &=-\frac B2\gamma_t^I\dd t
   +\sqrt{\frac B2}\,\dd W_t^I.
 \label{eq:field-ou-i}
\end{align}
At stationarity,
\begin{equation}
  x\sim\Gamma(\alpha,\text{rate }\alpha),
  \qquad
  \E x=1,\qquad \Var(x)=\alpha^{-1},
  \label{eq:field-gamma-stationary}
\end{equation}
and
\begin{equation}
  \gamma^R,\gamma^I\sim\mathcal N(0,1/2),
  \qquad
  \E|\gamma|^2=1.
  \label{eq:field-speckle-stationary}
\end{equation}
The characteristic relaxation times are \(A^{-1}\) for the texture mean and \(2/B\) for each
speckle quadrature. Exact transition densities underlie the earlier inference and
parameter-estimation results
\citep{RousselCoatanhayBaussard2019Parameters,RousselCoatanhayBaussard2019ForwardBackward}.

\subsection{Observation fibres and projected covariance}

In real coordinates \(u=\gamma^R\), \(v=\gamma^I\),
\begin{equation}
  h(x,u,v)=(\sqrt{x}\,u,\sqrt{x}\,v).
  \label{eq:field-observation-map}
\end{equation}
For \(\psi=(R,I)\neq0\),
\begin{equation}
  h^{-1}(\psi)
  =
  \left\{\left(x,\frac{R}{\sqrt{x}},\frac{I}{\sqrt{x}}\right):x>0\right\}.
  \label{eq:field-fibre}
\end{equation}
Thus one coherent sample does not determine texture and speckle separately.

The latent symbol is
\[
  a_{\rm lat}(x,u,v)
  =
  \diag\!\left(\frac{2Ax}{\alpha},\frac B2,\frac B2\right),
\]
and the observed covariance density is
\begin{equation}
  Q(x,\Psi)
  =
  \frac{xB}{2}I_2
  +
  \frac{A}{2\alpha x}\Psi\Psi^\top.
  \label{eq:field-Q}
\end{equation}
For \(\Psi\neq0\),
\begin{equation}
  \Lambda_\theta=\frac{xB}{2},
  \qquad
  \Lambda_\rho=\frac{xB}{2}
  +\frac{A|\Psi|^2}{2\alpha x},
  \label{eq:field-Q-eigenvalues}
\end{equation}
so
\begin{equation}
  \Lambda_\rho-\Lambda_\theta
  =
  \frac{A|\Psi|^2}{2\alpha x}.
  \label{eq:field-gap}
\end{equation}
These formulas already exhibit the mechanism developed abstractly in
\cref{sec:fixed-shape}: the instantaneous observation is non-injective, but its local
quadratic covariance varies along the fibre.

\subsection{Scope of the benchmark}

Field is useful here because its CIR and OU transitions are explicit and its multiplicative
observation permits exact calculations. We do not identify \(x_t\) with a universal physical
state of the sea. Coherent wave motion, intermittency, acquisition-dependent scattering and
receiver filtering are absent from the benchmark, and the historical data used in the earlier
work are not available for the present study. The model is therefore used as a controlled
reference generator, with alternative models introduced later to test the limits of the derived
structure.

\section{Diffusion geometry and scale coupling}
\label{sec:symbol-geometry}

\subsection{Diffusion symbol and metric}

For a diffusion
\begin{equation}
  \dd X_t^i=b^i(X_t)\dd t+\sigma^i_\alpha(X_t)\dd W_t^\alpha,
  \label{eq:generic-diffusion}
\end{equation}
write \(a=\sigma\sigma^\top\). Then
\[
  \Cov(\Delta X\mid X_t=x)=a(x)\Delta t+o(\Delta t),
\]
and
\begin{equation}
  2\Gamma(f,g)=a^{ij}\partial_i f\,\partial_j g,
  \qquad
  \frac{\dd}{\dd t}[f(X),g(X)]_t=2\Gamma(f,g)(X_t).
  \label{eq:carre-qv}
\end{equation}
When \(a\) is positive definite we use
\begin{equation}
  g=a^{-1}
  \label{eq:diffusion-metric}
\end{equation}
as the diffusion metric. It depends only on the symbol, not on the drift.

For Field,
\begin{equation}
  a
  =
  \diag\!\left(\frac{2Ax}{\alpha},\frac B2,\frac B2\right),
  \label{eq:field-symbol}
\end{equation}
hence
\begin{equation}
  \dd s^2
  =
  \frac{\alpha}{2Ax}\dd x^2
  +
  \frac2B(\dd u^2+\dd v^2).
  \label{eq:field-metric}
\end{equation}
The Lamperti coordinates
\begin{equation}
  r=\sqrt{\frac{2\alpha x}{A}},
  \qquad
  U=\sqrt{\frac2B}u,
  \qquad
  V=\sqrt{\frac2B}v
  \label{eq:field-lamperti}
\end{equation}
give
\begin{equation}
  \dd s^2=\dd r^2+\dd U^2+\dd V^2.
  \label{eq:field-euclidean}
\end{equation}
Thus the scalar Field symbol is flat on \(x>0\); the factor \(x^{-1}\) in
\eqref{eq:field-metric} is a coordinate effect. The transformed drift remains non-trivial,
so this statement concerns the symbol geometry rather than the complete generator.

\subsection{Scalar scale coupling}

Let \(B=B(r)>0\). Then
\begin{equation}
  g_B=\dd r^2+\frac2{B(r)}(\dd u^2+\dd v^2)
  \label{eq:warped-field}
\end{equation}
is a warped product. With
\begin{equation}
  q_B(r)=\log\frac{B(r)}{B_{\rm ref}},
  \label{eq:qB-def}
\end{equation}
the sectional curvatures are
\begin{align}
  K_{ru}=K_{rv}
  &=
  \frac12q_B''-\frac14(q_B')^2,
  \label{eq:K-radial}\\
  K_{uv}
  &=
  -\frac14(q_B')^2,
  \label{eq:K-fast}
\end{align}
and
\begin{equation}
  \mathcal R
  =
  2q_B''-\frac32(q_B')^2.
  \label{eq:scalar-curvature}
\end{equation}
Hence this family is flat if and only if \(B\) is constant. For
\(B(r)=B_0e^{\kappa r}\), all sectional curvatures equal \(K=-\kappa^2/4\).
Curvature detects non-constant scale coupling but not the sign of \(\kappa\).

\section{Shape coupling and positive-definite covariance geometry}
\label{sec:shape-geometry}

To allow the principal axes or relative eigenvalues of the fast covariance to vary with the
slow state, let
\[
  D(r)\in\SPD(m),
  \qquad
  S(r)=a(r)^2D(r).
  \label{eq:S-def}
\]
We use standard positive-definite matrix geometry
\citep{Moakher2005SPD,JungSchwartzmanGroisser2015ScalingRotation,MostajeranSepulchre2018OrderingSPD}
to separate scale from shape.

For symmetric tangent matrices \(H,K\) at \(S\), define
\begin{equation}
  \langle H,K\rangle_S
  =
  \tr(S^{-1}HS^{-1}K),
  \qquad
  \|H\|_S^2=\langle H,H\rangle_S,
  \label{eq:affine-relative-inner}
\end{equation}
and
\begin{equation}
  \Pi_S^0H
  =
  H-\frac{\tr(S^{-1}H)}{m}S.
  \label{eq:traceless-proj}
\end{equation}
For \(H=S'\),
\begin{equation}
  \kappa
  =
  \frac1m\tr(S^{-1}S')
  =
  \frac1m\frac{\dd}{\dd r}\log\det S,
  \label{eq:kappa-scale}
\end{equation}
measures logarithmic scale change, while
\begin{equation}
  \chi_{\rm sh}
  =
  \frac1{\sqrt2}\|\Pi_S^0S'\|_S
  \label{eq:chi-sh}
\end{equation}
measures shape change. On a connected interval,
\[
  \chi_{\rm sh}\equiv0
  \quad\Longleftrightarrow\quad
  S(r)=s(r)S_0
\]
for fixed \(S_0\in\SPD(m)\).

For \(m=2\),
\begin{equation}
  S(r)
  =
  s(r)R_{\theta(r)}
  \begin{pmatrix}
  e^{\eta(r)}&0\\
  0&e^{-\eta(r)}
  \end{pmatrix}
  R_{\theta(r)}^\top,
  \label{eq:S-2D}
\end{equation}
and
\begin{equation}
  \chi_{\rm sh}^2
  =
  (\eta')^2+4(\theta')^2\sinh^2\eta.
  \label{eq:chi-sh-2D}
\end{equation}
Shape can therefore change through eccentricity, rotation of the principal axes, or both.

The same distinction appears in the observer connection of the matrix-valued Field extension:
it is flat precisely for
\begin{equation}
  B(x)=b(x)B_0
  \label{eq:observer-flat-fixed-shape}
\end{equation}
with fixed \(B_0\in\SPD(2)\). Fixed anisotropy does not by itself produce observer holonomy;
only change of normalized shape does. Holonomy is not used as a recovery tool below, but this
equivalence motivates the fixed-shape/shape-changing split.

\section{Radar fibres, projectability and quadratic observability}
\label{sec:partial-observation}

\subsection{Observation fibres and covariance splitting}

For Field,
\begin{equation}
  h(x,u,v)=(R,I)=(\sqrt{x}\,u,\sqrt{x}\,v),
  \label{eq:field-observation-map-sec6}
\end{equation}
and, for \(\psi=(R,I)\neq0\),
\begin{equation}
  h^{-1}(\psi)
  =
  \left\{
  \left(x,\frac{R}{\sqrt{x}},\frac{I}{\sqrt{x}}\right):x>0
  \right\}.
  \label{eq:field-fibre-sec6}
\end{equation}
A vertical tangent is
\begin{equation}
  Z
  =
  \partial_x-\frac{u}{2x}\partial_u-\frac{v}{2x}\partial_v,
  \qquad Dh(Z)=0.
  \label{eq:vertical-field}
\end{equation}

The observed symbol is
\begin{equation}
  Q=Dh\,a_{\rm lat}\,Dh^\top
  =
  \frac{\dd[Y]_t}{\dd t}.
  \label{eq:projected-symbol-general}
\end{equation}
For scalar \(B=B(x)\),
\begin{equation}
  Q(x,\Psi)
  =
  \frac{xB(x)}2I_2
  +
  \frac{A}{2\alpha x}\Psi\Psi^\top.
  \label{eq:field-Q-observer}
\end{equation}
Rotational equivariance gives
\begin{equation}
  Q=\Lambda_\theta P_\theta+\Lambda_\rho P_\rho,
  \label{eq:Q-radial-transverse}
\end{equation}
with
\begin{equation}
  \Lambda_\theta=\frac{xB(x)}2,
  \qquad
  \Lambda_\rho
  =
  \frac{xB(x)}2+\frac{A|\Psi|^2}{2\alpha x},
  \label{eq:field-eigenvalues}
\end{equation}
and
\begin{equation}
  \Delta_Q
  =
  \Lambda_\rho-\Lambda_\theta
  =
  \frac{A|\Psi|^2}{2\alpha x}.
  \label{eq:field-gap-sec6}
\end{equation}

\subsection{Projectability and vertical defect}

A generator \(L\) is projectable through a submersion \(h:M\to N\) when
\begin{equation}
  L(F\circ h)=(\bar L F)\circ h
  \label{eq:generator-projectability}
\end{equation}
for every smooth \(F\). Since
\begin{equation}
  L(F\circ h)
  =
  (\partial_aF)(h)Lh^a
  +
  (\partial_{ab}F)(h)\Gamma(h^a,h^b),
  \label{eq:generator-chain-rule}
\end{equation}
projectability is equivalent to basicity of \(Lh^a\) and
\(\Gamma(h^a,h^b)\), the classical diffusion reduction/lumpability criterion
\citep{RogersPitman1981MarkovFunctions,Coxson1984LumpabilityObservability,HorstmeyerAtay2016ExactLumpability}.
For Field, \eqref{eq:field-Q-observer} varies with \(x\) at fixed \(\Psi\), so the quadratic
characteristic is not basic and the generator is not strongly projectable. This does not
exclude weaker Markov properties under special initial laws.

Let \(\mathcal V_x=\ker Dh_x\). For \(v\in\mathcal V_x\), choose a curve
\(c(s)\subset h^{-1}(h(x))\) with \(c(0)=x\), \(\dot c(0)=v\), and define
\begin{equation}
  d^{\rm v}Q_x(v)
  =
  \left.\frac{\dd}{\dd s}Q_{c(s)}\right|_{s=0}.
  \label{eq:vertical-defect}
\end{equation}

\begin{proposition}[Vertical criterion for symbol basicity]
\label{prop:vertical-basic}
If the fibres of \(h\) are connected, then
\[
  d^{\rm v}Q\equiv0
  \quad\Longleftrightarrow\quad
  Q\ \text{is basic}.
\]
\end{proposition}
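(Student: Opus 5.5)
The plan is to read ``basic'' in the usual sense for a submersion \(h:M\to N\): \(Q\) is basic when it is the pull-back of a field on the base, i.e.\ \(Q=\bar Q\circ h\) for some smooth \(\bar Q:N\to\Sym(n)\). Since \(Q=Dh\,a_{\rm lat}\,Dh^\top\) already takes values in symmetric matrices indexed by observation coordinates, this is the same as saying that \(Q\) is constant on each fibre \(h^{-1}(y)\). The proof will show that constancy on fibres is equivalent to \(d^{\rm v}Q\equiv0\), and then verify that a fibrewise-constant \(Q\) descends to a \emph{smooth} field on \(N\).

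\emph{Direction \((\Leftarrow)\).} If \(Q=\bar Q\circ h\), then along any curve \(c(s)\) in a fibre we have \(h(c(s))\equiv h(x)\). Hence \(Q_{c(s)}=\bar Q(h(x))\) is constant in \(s\), and its derivative \eqref{eq:vertical-defect} vanishes. This direction needs no connectedness. It also shows that \(d^{\rm v}Q_x(v)\) is well defined, independent of the chosen curve: it equals \(DQ_x(v)\) applied componentwise, because \(Q\) is a smooth function on \(M\) and \(v\in\mathcal V_x\).

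\emph{Direction \((\Rightarrow)\).} Fix \(y\in N\). The fibre \(F_y=h^{-1}(y)\) is an embedded submanifold of dimension \(\dim M-\dim N\) with tangent space \(\mathcal V_x=\ker Dh_x\), by the regular value theorem since \(h\) is a submersion. By the remark above, \(d^{\rm v}Q_x(v)=DQ_x(v)\), so \(d^{\rm v}Q\equiv0\) says that the restriction \(Q|_{F_y}\) has zero differential at every point. A smooth map with vanishing differential on a connected manifold is constant: the set where it equals a given value is open, by integrating along piecewise smooth paths in local charts, and closed, by continuity. Connectedness of \(F_y\) is exactly the hypothesis. Hence \(Q\) is constant on each fibre, and \(\bar Q(y):=Q_x\) for any \(x\in h^{-1}(y)\) is well defined on \(h(M)\).

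The remaining point, which I expect to be the only real obstacle, is the smoothness of \(\bar Q\). It is handled by local sections of the submersion. Around any \(x_0\), the local form of submersions gives a smooth \(\varsigma:U\to M\) on a neighbourhood \(U\) of \(h(x_0)\) with \(h\circ\varsigma=\mathrm{id}\). Then \(\bar Q=Q\circ\varsigma\) on \(U\), which is smooth. Thus \(Q=\bar Q\circ h\) with \(\bar Q\) smooth on the open set \(h(M)\), which is the claim.

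For the Field map \eqref{eq:field-observation-map-sec6}, the fibres \eqref{eq:field-fibre-sec6} are half-lines parametrised by \(x>0\), hence connected, so the criterion applies. Since \(\partial_x Q\neq0\) at fixed \(\Psi\) in \eqref{eq:field-Q-observer}, we get \(d^{\rm v}Q\neq0\), consistent with the non-projectability remark above.
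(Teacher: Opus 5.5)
Your proof is correct and follows the paper's argument: $d^{\rm v}Q\equiv0$ means the restriction of $Q$ to each fibre has vanishing differential, and on a connected fibre this is equivalent to fibrewise constancy, i.e.\ basicity. Your extra step, which produces a smooth $\bar Q=Q\circ\varsigma$ from local sections of the submersion, supplies a detail the paper leaves implicit, since the paper simply identifies ``constant on each fibre'' with ``basic''. One small correction: the curve-independence of $d^{\rm v}Q_x(v)$ follows from the chain rule, not from your $(\Leftarrow)$ argument as you suggest.
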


\begin{proof}
The restriction of \(Q\) to a connected fibre is constant if and only if its derivative
vanishes on every vertical tangent direction.
\end{proof}

For Field,
\begin{equation}
  \left.\partial_xQ\right|_\Psi
  =
  \frac{B+xB'}2I_2
  -
  \frac{A}{2\alpha x^2}\Psi\Psi^\top.
  \label{eq:field-vertical-Q}
\end{equation}
Thus the obstruction to projectability can itself carry latent information. Quadratic
variation is known to reveal hidden states in singular or signal-dependent observation
problems
\citep{JoannidesLeGland1997PerfectObservation,CrisanKouritzinXiong2009SignalDependentObservation,ClarkCrisan2026Identification}.
Filtering with degenerate observation noise has also been treated by
\citet{QianZhangYin2022DegenerateFiltering}. The contribution below is the explicit
recovery/cancellation analysis for the texture--coherence class. For Field the spectral gap already lifts the non-zero observation
fibre; \cref{sec:fixed-shape} identifies the class for which this mechanism is protected.

\section{Fixed-shape spectral recovery}
\label{sec:fixed-shape}

The previous section showed that the instantaneous radar observation leaves a fibre of
latent states, whereas the local quadratic covariance can vary along that fibre.  We now ask
when this variation has a structure strong enough to recover the slow coordinate.

Although the geometry in \cref{sec:model-class} was introduced in a Lamperti coordinate,
it is useful here to allow an arbitrary scalar slow coordinate \(\xi\):
\begin{equation}
  \dd \xi_t=b(\xi_t)\,\dd t+\sigma(\xi_t)\,\dd W_t^0,
  \qquad
  \sigma(\xi)>0.
  \label{eq:slow-general-coordinate}
\end{equation}
The fast covariance is \(D(\xi)\), the coherent observation is
\(Y=a(\xi)Z\), and there is no slow--fast cross quadratic variation.  On a fibre
\(Y=y\), It\^o's formula gives
\begin{equation}
  Q(\xi,y)
  =
  a(\xi)^2D(\xi)
  +
  \sigma(\xi)^2
  \left(\frac{a'(\xi)}{a(\xi)}\right)^2
  yy^\top .
  \label{eq:Q-general-coordinate}
\end{equation}
Accordingly, define
\begin{equation}
  S(\xi)=a(\xi)^2D(\xi),
  \qquad
  q(\xi)
  =
  \sigma(\xi)^2
  \left(\frac{a'(\xi)}{a(\xi)}\right)^2 .
  \label{eq:s-q-general}
\end{equation}
The scalar \(q\) is unchanged by a smooth reparametrization of the slow coordinate.
In Lamperti coordinate, \(\sigma\equiv1\), and \eqref{eq:s-q-general} reduces to the
definition used previously.

\subsection{Fixed-shape class and generalized spectrum}

Assume that the fast covariance has a fixed positive-definite shape:
\begin{equation}
  D(\xi)=d(\xi)D_0,
  \qquad
  d(\xi)>0,
  \qquad
  D_0\in\SPD(m)
  \ \text{constant}.
  \label{eq:fixed-shape-assumption}
\end{equation}
Set
\begin{equation}
  s(\xi)=a(\xi)^2d(\xi).
  \label{eq:s-fixed-shape}
\end{equation}
Then
\begin{equation}
  Q(\xi,y)
  =
  s(\xi)D_0+q(\xi)yy^\top.
  \label{eq:Q-fixed-shape}
\end{equation}

The natural spectrum in this anisotropic setting is the generalized spectrum of the pair
\((Q,D_0)\).  Equivalently, one may whiten once and for all:
\begin{equation}
  \widetilde Q
  =
  D_0^{-1/2}QD_0^{-1/2}
  =
  s(\xi)I_m
  +
  q(\xi)\widetilde y\,\widetilde y^\top,
  \qquad
  \widetilde y=D_0^{-1/2}y .
  \label{eq:Q-whitened}
\end{equation}
Define
\begin{equation}
  \rho^2
  =
  \|\widetilde y\|^2
  =
  y^\top D_0^{-1}y.
  \label{eq:rho-fixed}
\end{equation}

\begin{theorem}[Fixed-shape spectral recovery]
\label{thm:fixed-shape}
Let \(m\ge2\), let \(I\) be an interval of slow states, and assume
\eqref{eq:slow-general-coordinate}--\eqref{eq:fixed-shape-assumption}.  Fix an
observation \(y\neq0\).  Then:

\begin{enumerate}
\item The generalized eigenvalue transverse to \(D_0^{-1}y\) is
\[
  \lambda_\perp(\xi)=s(\xi)
\]
with multiplicity \(m-1\), while the longitudinal generalized eigenvalue is
\[
  \lambda_\parallel(\xi)
  =
  s(\xi)+q(\xi)\rho^2.
\]

\item The generalized spectral gap is
\begin{equation}
  \Delta(\xi;y)
  :=
  \lambda_\parallel-\lambda_\perp
  =
  q(\xi)\rho^2.
  \label{eq:fixed-gap}
\end{equation}

\item Along the observation fibre \(F_y\),
\begin{equation}
  Q(\xi,y)=Q(\xi',y)
  \quad\Longleftrightarrow\quad
  \bigl(s(\xi),q(\xi)\bigr)
  =
  \bigl(s(\xi'),q(\xi')\bigr).
  \label{eq:fixed-injectivity-pair}
\end{equation}
Consequently,
\[
  Q|_{F_y}\ \text{is injective on }I
  \quad\Longleftrightarrow\quad
  \xi\longmapsto\bigl(s(\xi),q(\xi)\bigr)
  \ \text{is injective on }I.
\]

\item If \(q\) itself is injective on \(I\), the slow coordinate is recovered directly from
the gap:
\begin{equation}
  \xi
  =
  q^{-1}\!\left(
  \frac{\Delta(\xi;y)}{\rho^2}
  \right).
  \label{eq:fixed-explicit-inversion}
\end{equation}

\item If \(q\in C^1(I)\) and
\begin{equation}
  |q'(\xi)|\ge c_I>0
  \qquad\text{for all }\xi\in I,
  \label{eq:q-lower-derivative}
\end{equation}
then the inversion is Lipschitz stable:
\begin{equation}
  |\xi-\xi'|
  \le
  \frac{
  |\Delta(\xi;y)-\Delta(\xi';y)|
  }{
  c_I\rho^2
  }.
  \label{eq:fixed-lipschitz}
\end{equation}
\end{enumerate}
\end{theorem}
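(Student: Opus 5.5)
The plan is to work entirely in the whitened form \eqref{eq:Q-whitened}. The generalized eigenproblem \(Qv=\lambda D_0v\) is equivalent to \(\widetilde Q w=\lambda w\) under \(w=D_0^{1/2}v\), because \(D_0^{-1/2}QD_0^{-1/2}w=\lambda w\). It therefore suffices to diagonalize \(s I_m+q\,\widetilde y\widetilde y^\top\), which is a scalar matrix plus a rank-one term.

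\textbf{Items 1 and 2.} Every \(w\perp\widetilde y\) is an eigenvector of \(\widetilde Q\) with eigenvalue \(s\). This eigenspace has dimension \(m-1\) because \(\widetilde y\neq0\). The vector \(\widetilde y\) itself gives the eigenvalue \(s+q\rho^2\). Pulling back, \(v=D_0^{-1/2}\widetilde y=D_0^{-1}y\) is the longitudinal generalized eigenvector. The transverse ones are the \(v\) with \(\widetilde y^\top D_0^{1/2}v=y^\top v=0\), so "transverse to \(D_0^{-1}y\)" should be read in the \(D_0\)-inner product. Subtracting the two eigenvalues gives \eqref{eq:fixed-gap}. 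Since \(q\ge0\), one has \(\lambda_\parallel\ge\lambda_\perp\). When \(q=0\) the spectrum degenerates, but the formulas still hold.

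\textbf{Item 3.} The direction \(\Leftarrow\) is immediate from \eqref{eq:Q-fixed-shape}. For \(\Rightarrow\), suppose \(Q(\xi,y)=Q(\xi',y)\); then the generalized spectra coincide, so for \(m\ge2\) we can read off the following.
\begin{itemize}
\item Because \(m\ge2\), \(\lambda_\perp=s\) occurs with multiplicity at least one and is the smallest generalized eigenvalue. Hence \(s(\xi)=s(\xi')\).
\item The largest generalized eigenvalue is \(s+q\rho^2\). With \(\rho^2>0\), this gives \(q(\xi)=q(\xi')\).
\end{itemize}
A quicker alternative avoids spectra altogether. Apply the equal matrices to a vector \(v\) with \(y^\top v=0\) and \(D_0v\neq0\); such a \(v\) exists because \(m\ge2\). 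This gives \(s(\xi)D_0v=s(\xi')D_0v\). Then apply both matrices to \(D_0^{-1}y\) to obtain \(q(\xi)\rho^2y=q(\xi')\rho^2y\). The injectivity corollary follows directly.

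\textbf{Item 4.} The gap gives \(q(\xi)=\Delta/\rho^2\), and the assumed injectivity of \(q\) lets us apply \(q^{-1}\).

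\textbf{Item 5.} The condition \(|q'|\ge c_I>0\) together with continuity of \(q'\) on the interval \(I\) forces \(q'\) to have constant sign, by the intermediate value theorem. So \(q\) is strictly monotone, which also covers item 4. The mean value theorem then gives
\[
|q(\xi)-q(\xi')|=|q'(\eta)|\,|\xi-\xi'|\ge c_I|\xi-\xi'|.
\]
Multiplying by \(\rho^2\) and using \eqref{eq:fixed-gap} yields \eqref{eq:fixed-lipschitz}.

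\textbf{Main obstacle.} The computations are routine; the step most likely to go wrong is interpretation. The multiplicity statement and the meaning of "transverse" must be phrased in the \(D_0\)-geometry, or equivalently after whitening. Item 3 also needs \(m\ge2\): for \(m=1\) the scalar \(s+q\rho^2\) cannot separate \(s\) from \(q\).
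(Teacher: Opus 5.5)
Your proposal is correct and follows the paper's proof essentially step for step: whitening to a scalar-plus-rank-one matrix for items 1--2, separating the coefficients $s$ and $q$ for item 3 (your test-vector argument is a concrete form of the paper's observation that $I_m$ and $\widetilde y\,\widetilde y^\top$ are linearly independent when $m\ge2$), and the mean-value theorem for item 5. Your explicit pull-back of the eigenvectors, which identifies $D_0^{-1}y$ as the longitudinal generalized eigenvector and reads ``transverse'' in the $D_0$-inner product, is a useful clarification that the paper leaves implicit.
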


\begin{proof}
Equation \eqref{eq:Q-whitened} is a rank-one perturbation of a scalar matrix.
Every vector orthogonal to \(\widetilde y\) is therefore an eigenvector with eigenvalue
\(s(\xi)\), whereas
\[
  \widetilde Q\,\widetilde y
  =
  \bigl(s(\xi)+q(\xi)\rho^2\bigr)\widetilde y.
\]
This proves the first two statements.

For \(m\ge2\) and \(y\neq0\), the matrices \(I_m\) and
\(\widetilde y\widetilde y^\top\) are linearly independent.  Hence equality of the two
whitened covariance matrices is equivalent to equality of the two scalar coefficients
\(s\) and \(q\), which proves \eqref{eq:fixed-injectivity-pair}.  If \(q\) is injective,
\eqref{eq:fixed-explicit-inversion} follows from \eqref{eq:fixed-gap}.

Finally, \eqref{eq:q-lower-derivative} implies that \(q'\) has constant sign on \(I\).
The mean-value theorem gives
\[
  |q(\xi)-q(\xi')|
  \ge
  c_I|\xi-\xi'|,
\]
and multiplication by the fixed positive factor \(\rho^2\) gives
\eqref{eq:fixed-lipschitz}.
\end{proof}

The theorem makes an important separation explicit.  The transverse eigenvalue
\(s(\xi)\) contains the scale coupling, whereas the gap isolates \(q(\xi)\).  Thus, whenever
\(q\) is injective, arbitrary variation of the fast scale \(s\) cannot obstruct recovery
through the spectral gap.

\subsection{Local visibility and perturbation stability}

The same structure provides a local criterion.  Along a fixed fibre,
\begin{equation}
  \partial_\xi Q
  =
  s'(\xi)D_0+q'(\xi)yy^\top .
  \label{eq:fixed-vertical-derivative}
\end{equation}
Since the two matrices are linearly independent for \(m\ge2\) and \(y\neq0\),
\begin{equation}
  \partial_\xi Q=0
  \quad\Longleftrightarrow\quad
  s'(\xi)=q'(\xi)=0.
  \label{eq:fixed-local-criterion}
\end{equation}
In particular, \(q'(\xi)\neq0\) is sufficient for a non-zero vertical quadratic defect
regardless of the scale derivative \(s'(\xi)\).  This is the local robustness property that
will fail once the fast covariance is allowed to change shape.

There is also a simple perturbation bound.  Suppose that the estimated whitened covariance is
\[
  \widehat{\widetilde Q}
  =
  \widetilde Q+E,
  \qquad
  \|E\|_{\rm op}\le\varepsilon.
\]
By Weyl's inequality,
\[
  |\widehat\lambda_j-\lambda_j|
  \le\varepsilon,
\]
and therefore
\begin{equation}
  |\widehat\Delta-\Delta|
  \le2\varepsilon.
  \label{eq:gap-weyl}
\end{equation}
Under \eqref{eq:q-lower-derivative}, whenever the perturbed gap remains in the inversion
range \(q(I)\rho^2\),
\begin{equation}
  |\widehat\xi-\xi|
  \le
  \frac{2\varepsilon}{c_I\rho^2}.
  \label{eq:fixed-estimation-bound}
\end{equation}
The denominator already reveals the two intrinsic weak points of gap inversion:
a nearly stationary \(q\) and an observation close to \(y=0\).

\subsection{Field as a globally recoverable member of the class}

We now return to the original texture coordinate \(x\).  In Field,
\[
  a(x)=\sqrt{x},
  \qquad
  \sigma_x^2(x)=\frac{2Ax}{\alpha},
  \qquad
  D(x)=\frac{B(x)}{2}I_2.
\]
Hence
\begin{equation}
  s(x)=\frac{xB(x)}{2},
  \qquad
  q(x)
  =
  \frac{A}{2\alpha x}.
  \label{eq:field-s-q}
\end{equation}
The second function is strictly decreasing on \((0,\infty)\), independently of the positive
function \(B(x)\).

\begin{corollary}[Continuous-noiseless recovery in Field]
\label{cor:field-recovery}
Let \(A>0\), \(\alpha>0\), \(B(x)>0\), and let \(\Psi\neq0\).  In the ideal continuous
experiment in which the local quadratic covariance \(Q(x,\Psi)\) is known,
\begin{equation}
  \Lambda_\theta=\frac{xB(x)}{2},
  \qquad
  \Lambda_\rho-\Lambda_\theta
  =
  \frac{A|\Psi|^2}{2\alpha x},
  \label{eq:field-eigs-corollary}
\end{equation}
and therefore
\begin{equation}
  x
  =
  \frac{A|\Psi|^2}
  {2\alpha(\Lambda_\rho-\Lambda_\theta)}.
  \label{eq:field-x-recovery}
\end{equation}
Once \(x\) is recovered,
\begin{equation}
  \gamma=\frac{\Psi}{\sqrt{x}},
  \qquad
  B(x)=\frac{2\Lambda_\theta}{x}.
  \label{eq:field-gamma-B-recovery}
\end{equation}
Thus the latent texture is globally identified on each non-zero observation fibre,
independently of the functional form of the positive scale coupling \(B(x)\).
\end{corollary}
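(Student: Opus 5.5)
The corollary is a direct specialization of \cref{thm:fixed-shape} to the Field benchmark. I will work in the original texture coordinate \(\xi=x\), which the theorem allows because \eqref{eq:slow-general-coordinate} permits an arbitrary scalar slow coordinate. The Field ingredients are then \(a(x)=\sqrt{x}\), \(\sigma_x(x)^2=2Ax/\alpha\) from \eqref{eq:field-cir}, and \(D(x)=\tfrac{B(x)}{2}I_2\). The last of these has the fixed-shape form \eqref{eq:fixed-shape-assumption} with \(D_0=I_2\) and \(d(x)=B(x)/2\), and the slow and fast drivers are independent. Substituting into \eqref{eq:s-q-general} gives \eqref{eq:field-s-q}. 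Indeed \(s=x\cdot B(x)/2\), and \(q=\frac{2Ax}{\alpha}\bigl(\tfrac{1}{2x}\bigr)^2=\frac{A}{2\alpha x}\). Since \(D_0=I_2\), whitening is trivial, so \(\widetilde y=\Psi\) and \(\rho^2=|\Psi|^2\).

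Next I apply parts 1--2 of \cref{thm:fixed-shape} with \(m=2\) and \(y=\Psi\neq0\). The transverse eigenvalue is \(\Lambda_\theta=s(x)=xB(x)/2\), and the gap is \(\Lambda_\rho-\Lambda_\theta=q(x)\rho^2=A|\Psi|^2/(2\alpha x)\). This is \eqref{eq:field-eigs-corollary}, and it agrees with the direct computation \eqref{eq:field-Q-eigenvalues}. The function \(q(x)=A/(2\alpha x)\) is strictly decreasing on \((0,\infty)\) because \(A,\alpha>0\). It is therefore injective, and part 4 gives explicit inversion. Concretely, the gap is strictly positive whenever \(\Psi\neq0\), so I can solve \(\Delta=A|\Psi|^2/(2\alpha x)\) for \(x\), which yields \eqref{eq:field-x-recovery}. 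The only point that needs care is that the inverse is valid for every admissible gap: \(q\) maps \((0,\infty)\) bijectively onto \((0,\infty)\), so every positive gap corresponds to exactly one \(x\). No condition on \(B\) enters, because \(B\) affects only \(s\) and not \(q\).

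Once \(x>0\) is known, the fibre description \eqref{eq:field-fibre} gives the speckle as \(\gamma=\Psi/\sqrt{x}\). Dividing \(\Lambda_\theta=xB(x)/2\) by \(x/2\) gives \(B(x)=2\Lambda_\theta/x\), which is \eqref{eq:field-gamma-B-recovery}. Together, these show that the map from a point of the fibre to \(Q\) is injective on every non-zero fibre, and that the recovered \(x\) does not depend on the functional form of \(B\).

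There is no real obstacle. The one step that needs attention is the coordinate bookkeeping: \(q\) has to be computed with the original-coordinate volatility \(\sigma_x^2=2Ax/\alpha\) and not the Lamperti normalization. The invariance remark after \eqref{eq:s-q-general} covers this, and it guarantees the same \(q\) as \eqref{eq:q-original-coordinate}.
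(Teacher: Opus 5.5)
Your proposal is correct and follows the paper's own proof. The paper substitutes \(a(x)=\sqrt{x}\), \(\sigma_x^2=2Ax/\alpha\) and \(D(x)=\tfrac{B(x)}{2}I_2\) into \cref{thm:fixed-shape} with \(D_0=I_2\), reads off \(\Lambda_\theta=s(x)\) and the gap \(q(x)|\Psi|^2\), and inverts the injective \(q(x)=A/(2\alpha x)\). Your additional remarks on the surjectivity of \(q\) onto \((0,\infty)\) and on using the original-coordinate volatility are harmless refinements of that same argument.
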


\begin{proof}
Substitution of \eqref{eq:field-s-q} into
\cref{thm:fixed-shape} with \(D_0=I_2\) gives
\eqref{eq:field-eigs-corollary}.  Since \(q(x)=A/(2\alpha x)\) is injective,
\eqref{eq:field-x-recovery} follows from the gap, and the remaining identities are immediate.
\end{proof}

This corollary is the class-level reinterpretation of the exact Field reconstruction already
identified in the underlying report.  Its scope is narrow but useful: it is a structural
statement about the continuous-noiseless experiment, not a claim that \(x_t\) can be
reconstructed accurately pulse by pulse in a finite-bandwidth radar.

On a compact interval \(I=[x_{\min},x_{\max}]\subset(0,\infty)\),
\[
  |q'(x)|
  =
  \frac{A}{2\alpha x^2}
  \ge
  \frac{A}{2\alpha x_{\max}^2}.
\]
Combining this with \eqref{eq:fixed-estimation-bound} yields
\begin{equation}
  |\widehat x-x|
  \le
  \frac{4\alpha x_{\max}^2}{A|\Psi|^2}\,\varepsilon
  \label{eq:field-stability-bound}
\end{equation}
for a whitened operator-norm covariance error bounded by \(\varepsilon\).
The blow-up as \(|\Psi|\to0\) and the deterioration as \(\alpha\to\infty\) are not numerical
artifacts; they are already present in the deterministic conditioning of the inverse map.

\subsection{An alternative texture model}

The Field result relies on the scaling of the CIR symbol, not merely on the observation
\(\Psi=\sqrt{x}\gamma\).  Consider instead a log-OU texture:
\[
  x=e^{R-v/2},
\]
where \(R\) is an OU process whose stationary variance is \(v\).  Its diffusion coefficient
in the \(x\)-coordinate satisfies
\[
  \sigma_x^2(x)=2Av\,x^2.
\]
Under the same coherent modulation \(a(x)=\sqrt{x}\),
\begin{equation}
  q_{\log{\rm OU}}(x)
  =
  \sigma_x^2(x)
  \left(\frac{a'(x)}{a(x)}\right)^2
  =
  \frac{Av}{2},
  \label{eq:logou-q}
\end{equation}
which is constant.

\begin{corollary}[Failure of gap recovery for log-OU texture]
\label{cor:logou-gap-failure}
For the log-OU texture above, the generalized spectral gap is
\[
  \Delta(x;y)
  =
  \frac{Av}{2}\,y^\top D_0^{-1}y,
\]
and is therefore constant along each non-zero observation fibre.  The texture cannot be
recovered from the gap alone.
\end{corollary}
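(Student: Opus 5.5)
The plan is to reduce the corollary to parts 1--2 of \cref{thm:fixed-shape}, applied in the original texture coordinate \(\xi=x\). This is legitimate because \eqref{eq:slow-general-coordinate} allows an arbitrary scalar slow coordinate, and \(q\) in \eqref{eq:s-q-general} is invariant under reparametrization. I will assume, as the statement implicitly does, that the fast covariance has the fixed-shape form \eqref{eq:fixed-shape-assumption}, \(D(x)=d(x)D_0\). No assumption on \(d\) is needed, since \(d\) enters only through \(s(x)=x\,d(x)\).

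The first step is to confirm the diffusion coefficient of the log-OU texture in the \(x\)-coordinate. Write \(\dd R_t=-AR_t\dd t+\sqrt{2Av}\,\dd W_t^0\); its stationary variance is \(v\). It\^o's formula applied to \(x=e^{R-v/2}\) gives a martingale part \(x\sqrt{2Av}\,\dd W^0_t\). Hence \(\sigma_x^2(x)=2Av\,x^2\), as stated.

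The second step is to compute \(q\). With \(a(x)=\sqrt{x}\) we have \(a'/a=1/(2x)\), so
\[
  q(x)=2Avx^2\cdot\frac1{4x^2}=\frac{Av}{2},
\]
which is \eqref{eq:logou-q}.

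The third step is to apply \cref{thm:fixed-shape}, part 2. It gives \(\Delta(x;y)=q(x)\rho^2=\tfrac{Av}{2}\,y^\top D_0^{-1}y\). For a fixed observation \(y\neq0\), the right-hand side depends only on \(y\), so it is constant along the fibre \(F_y\).

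Non-recoverability then follows immediately. Any two latent points \((x,z)\) and \((x',z')\) on \(F_y\) with \(x\neq x'\) produce the same gap, so no function of \(\Delta\) alone can separate them. The inversion \eqref{eq:fixed-explicit-inversion} fails because \(q\) is not injective, and the Lipschitz bound \eqref{eq:fixed-lipschitz} is vacuous because \(q'\equiv0\).

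It is worth remarking that this is a statement about the gap only. By part 3 of \cref{thm:fixed-shape}, \(Q|_{F_y}\) could still separate points through \(s(x)=x\,d(x)\) when that function is injective. The corollary does not contradict this, since it concerns only the scale-free signature.

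There is no serious obstacle here. The only points needing care are keeping \(y^\top D_0^{-1}y>0\) for \(y\neq0\), and stating the implicit fixed-shape assumption explicitly.
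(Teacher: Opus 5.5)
Your proposal is correct and matches the paper's implicit argument (the paper gives no separate proof): compute $q_{\log\mathrm{OU}}=Av/2$ from $\sigma_x^2=2Av\,x^2$ and $a=\sqrt{x}$, then substitute into part 2 of \cref{thm:fixed-shape} to obtain a gap that depends only on $y$. Your two extra remarks, that the fixed-shape assumption is implicit and that $s(x)=x\,d(x)$ may still carry information, agree with the discussion the paper gives right after the corollary.
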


The constancy of the gap does not exclude information in the transverse scale \(s(x)\).  A complete quadratic-variation counterexample is obtained, for example,
by choosing a fixed-shape fast covariance \(D(x)=d_0x^{-1}D_0\).  Then
\[
  s(x)=a(x)^2d_0x^{-1}=d_0
\]
and \(q(x)=Av/2\), so
\[
  Q(x,y)
  =
  d_0D_0+\frac{Av}{2}yy^\top
\]
is constant on the entire fibre.  Thus multiplicative observation by itself does not guarantee
quadratic recovery of the hidden texture.

This example also clarifies the scope of \cref{thm:fixed-shape}. Quadratic variation has
long been known to reveal hidden-state information in singular observation problems
\citep{CrisanKouritzinXiong2009SignalDependentObservation,ClarkCrisan2026Identification}.
The class-specific contribution here is the exact fixed-shape spectral criterion, its
conditioning bound, and the way it interfaces with the texture--coherence structure.

\section{Shape-changing visibility and exact cancellation}
\label{sec:shape-changing}

The fixed-shape theorem works because, after one global whitening, all variation of the fast
covariance is scalar and the slow contribution appears as a rank-one perturbation.  We now
remove that structural protection.

Let
\begin{equation}
  Q(\xi,y)=S(\xi)+q(\xi)yy^\top,
  \qquad
  S(\xi)\in\SPD(m),
  \label{eq:Q-shape-general}
\end{equation}
with \(y\neq0\) fixed along the observation fibre.  The quantities \(S\) and \(q\) are those
defined in \eqref{eq:s-q-general}.  All derivatives in this section are with respect to the
chosen slow coordinate \(\xi\).

\subsection{Relative scale and shape rates}

At a given \(S\), use the affine-relative inner product
\[
  \langle H,K\rangle_S
  =
  \tr(S^{-1}HS^{-1}K),
  \qquad
  \|H\|_S^2=\langle H,H\rangle_S,
\]
and the traceless projection
\[
  \Pi_S^0H
  =
  H-\frac{\tr(S^{-1}H)}{m}S.
\]
Define
\begin{equation}
  \kappa
  =
  \frac1m\tr(S^{-1}S')
  =
  \frac1m\frac{\dd}{\dd\xi}\log\det S,
  \label{eq:kappa-shape-section}
\end{equation}
and
\begin{equation}
  \chi_{\rm sh}
  =
  \frac1{\sqrt2}\|\Pi_S^0S'\|_S .
  \label{eq:chi-sh-shape-section}
\end{equation}
The scalar \(\kappa\) measures relative scale change; \(\chi_{\rm sh}\) measures change of
normalized shape.

For the observed direction, set
\begin{equation}
  \rho^2
  =
  y^\top S^{-1}y
  \label{eq:rho-shape}
\end{equation}
and introduce the \(S\)-traceless rank-one tensor
\begin{equation}
  H_y
  =
  yy^\top-\frac{\rho^2}{m}S.
  \label{eq:Hy-def}
\end{equation}
A direct whitening calculation gives
\begin{equation}
  \|H_y\|_S
  =
  \rho^2\sqrt{\frac{m-1}{m}}.
  \label{eq:Hy-norm}
\end{equation}

\begin{theorem}[Shape-changing decomposition and cancellation]
\label{thm:shape-changing}
Under \eqref{eq:Q-shape-general}, the vertical derivative of the observed covariance along
the fibre \(y=\mathrm{const}\) is
\[
  Q'=S'+q'yy^\top.
\]
It admits the orthogonal scale--shape decomposition
\begin{equation}
  Q'
  =
  \left(
  \kappa+\frac{q'\rho^2}{m}
  \right)S
  +
  \left(
  \Pi_S^0S'+q'H_y
  \right),
  \label{eq:shape-tensor-decomposition}
\end{equation}
and consequently
\begin{equation}
  \|Q'\|_S^2
  =
  m\left(
  \kappa+\frac{q'\rho^2}{m}
  \right)^2
  +
  \left\|
  \Pi_S^0S'+q'H_y
  \right\|_S^2 .
  \label{eq:shape-decomposition}
\end{equation}
Moreover,
\begin{equation}
  \|Q'\|_S
  \ge
  \left[
  |q'|\rho^2\sqrt{\frac{m-1}{m}}
  -
  \sqrt2\,\chi_{\rm sh}
  \right]_+ ,
  \label{eq:shape-lower-bound}
\end{equation}
and, more sharply,
\begin{equation}
  \|Q'\|_S
  \ge
  \left\{
  m\left(
  \kappa+\frac{q'\rho^2}{m}
  \right)^2
  +
  \left[
  |q'|\rho^2\sqrt{\frac{m-1}{m}}
  -
  \sqrt2\,\chi_{\rm sh}
  \right]_+^2
  \right\}^{1/2}.
  \label{eq:shape-lower-bound-sharp}
\end{equation}
Finally, complete first-order cancellation,
\[
  Q'=0,
\]
occurs if and only if
\begin{equation}
  \Pi_S^0S'=-q'H_y,
  \qquad
  \kappa=-\frac{q'\rho^2}{m}.
  \label{eq:exact-cancellation}
\end{equation}
\end{theorem}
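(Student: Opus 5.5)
The plan is to work entirely in whitened coordinates at the fixed point \(\xi\). Set \(\widetilde H=S^{-1/2}HS^{-1/2}\), so that \(\langle H,K\rangle_S=\tr(\widetilde H\widetilde K)\) is the Frobenius inner product. The traceless projection \(\Pi_S^0\) then becomes the Frobenius-orthogonal projection onto trace-free symmetric matrices, and \(S\) corresponds to \(I_m\). Differentiating \eqref{eq:Q-shape-general} at fixed \(y\) gives \(Q'=S'+q'yy^\top\) immediately.

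For the decomposition, split each summand as (multiple of \(S\)) plus (traceless part). The \(S\)-component of \(S'\) is \(\frac1m\tr(S^{-1}S')S=\kappa S\), with remainder \(\Pi_S^0S'\). The \(S\)-component of \(yy^\top\) is \(\frac1m\tr(S^{-1}yy^\top)S=\frac{\rho^2}{m}S\), with remainder \(H_y\) by \eqref{eq:Hy-def}. Summing gives \eqref{eq:shape-tensor-decomposition}. Orthogonality holds because \(\langle S,K\rangle_S=\tr(S^{-1}K)=0\) for traceless \(K\). Pythagoras together with \(\|S\|_S^2=\tr I_m=m\) then yields \eqref{eq:shape-decomposition}. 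The norm \eqref{eq:Hy-norm} is already given; if needed it follows from \(\widetilde H_y=\widetilde y\widetilde y^\top-\frac{\rho^2}{m}I\), whose eigenvalues are \(\rho^2(1-1/m)\) once and \(-\rho^2/m\) with multiplicity \(m-1\).

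For the bounds, apply the reverse triangle inequality to the shape part:
\[
\|\Pi_S^0S'+q'H_y\|_S\ge\bigl|\,|q'|\|H_y\|_S-\|\Pi_S^0S'\|_S\bigr|\ge\bigl[|q'|\rho^2\sqrt{(m-1)/m}-\sqrt2\chi_{\rm sh}\bigr]_+,
\]
using \(\|\Pi_S^0S'\|_S=\sqrt2\chi_{\rm sh}\) from \eqref{eq:chi-sh-shape-section}. Inserting this into \eqref{eq:shape-decomposition} gives the sharper bound \eqref{eq:shape-lower-bound-sharp}. Dropping the nonnegative scale term then gives \eqref{eq:shape-lower-bound}.

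For cancellation, \(\|\cdot\|_S\) is a genuine norm because \(S\) is positive definite, so \(Q'=0\) holds iff both orthogonal components vanish. This is exactly \eqref{eq:exact-cancellation}.

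There is no real obstacle here. The only point needing care is verifying that \(\Pi_S^0\) is orthogonal projection for \(\langle\cdot,\cdot\rangle_S\), so that the scale/shape split is genuinely orthogonal. This follows directly from the whitening identity, and it is the step I would write out explicitly.
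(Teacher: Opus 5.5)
Your proposal is correct and follows essentially the same route as the paper. Like the paper, you split \(S'\) and \(yy^\top\) into \(S\)-proportional and \(S\)-traceless parts, use \(\langle S,\Pi_S^0H\rangle_S=\tr(S^{-1}\Pi_S^0H)=0\) for Pythagoras, obtain the bounds from the (reverse) triangle inequality with \eqref{eq:Hy-norm} and \(\|\Pi_S^0S'\|_S=\sqrt2\,\chi_{\rm sh}\), and derive cancellation from the vanishing of both orthogonal components; your whitening viewpoint is the one the paper itself uses in its appendix to compute \(\|H_y\|_S\).
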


\begin{proof}
Since
\[
  S'=\kappa S+\Pi_S^0S'
\]
and
\[
  yy^\top=\frac{\rho^2}{m}S+H_y,
\]
summing the two decompositions gives
\eqref{eq:shape-tensor-decomposition}.  The first term is proportional to \(S\), while the
second is \(S\)-traceless, so they are orthogonal for
\(\langle\cdot,\cdot\rangle_S\).  This proves
\eqref{eq:shape-decomposition}.

The triangle inequality in the traceless subspace gives
\[
  \|\Pi_S^0S'+q'H_y\|_S
  \ge
  |q'|\|H_y\|_S-\|\Pi_S^0S'\|_S.
\]
Using \eqref{eq:Hy-norm} and
\(\|\Pi_S^0S'\|_S=\sqrt2\,\chi_{\rm sh}\) gives
\eqref{eq:shape-lower-bound}; retaining the orthogonal scale term gives
\eqref{eq:shape-lower-bound-sharp}.

Finally, a sum of two orthogonal components is zero if and only if both components vanish.
This is precisely \eqref{eq:exact-cancellation}.
\end{proof}

The theorem gives a precise meaning to ``shape confounding''.  The rank-one variation
\(q'yy^\top\) itself contains both an isotropic component relative to \(S\) and a traceless
component aligned with the observed direction.  A changing fast covariance can cancel either
component, or both.

\subsection{Why fixed shape is protected}

If the normalized shape is fixed, then
\[
  \chi_{\rm sh}=0,
  \qquad
  \Pi_S^0S'=0.
\]
For \(q'\neq0\) and \(y\neq0\), the traceless contribution \(q'H_y\) cannot vanish when
\(m\ge2\).  Hence
\begin{equation}
  \|Q'\|_S
  \ge
  |q'|\rho^2\sqrt{\frac{m-1}{m}}
  >0.
  \label{eq:fixed-shape-protected}
\end{equation}
Thus a scalar scale variation cannot cancel the directional rank-one term, which accounts
for the local robustness in \cref{thm:fixed-shape}.

Once shape is allowed to vary, that protection disappears.  Complete cancellation requires
a shape rate of magnitude
\begin{equation}
  \chi_{\rm sh}
  =
  |q'|\rho^2
  \sqrt{\frac{m-1}{2m}},
  \label{eq:shape-required-cancellation}
\end{equation}
together with the scale condition in \eqref{eq:exact-cancellation}.  Thus cancellation is not
generic, but it is allowed by the geometry and can occur exactly.

\subsection{Local cancellation under unconstrained shape change}

The preceding statement yields the following local cancellation result.

\begin{corollary}[Local cancellation]
\label{cor:shape-cancellation}
Fix a point \((\xi_0,y)\) with \(y\neq0\), a positive-definite matrix
\(S_0=S(\xi_0)\), and any prescribed value \(q'(\xi_0)\), including
\(q'(\xi_0)\neq0\).  There exists a smooth local path
\(S(\xi)\in\SPD(m)\), with \(S(\xi_0)=S_0\), such that
\[
  \left.\partial_\xi Q(\xi,y)\right|_{\xi=\xi_0}=0.
\]
Consequently, no positive lower bound on local quadratic visibility can depend on
\(q'(\xi_0)\) and \(y\) alone when shape variation is unconstrained.
\end{corollary}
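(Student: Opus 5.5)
The plan is to construct the path explicitly so that its first derivative cancels the rank-one term. By \cref{thm:shape-changing}, we have \(\partial_\xi Q|_{\xi_0}=S'(\xi_0)+q'(\xi_0)yy^\top\), and \(Q'=0\) is equivalent to the pair of conditions \eqref{eq:exact-cancellation}. Rather than imposing the scale and shape parts separately, we impose the single condition
\[
  S'(\xi_0)=-q'(\xi_0)\,yy^\top .
\]
This condition contains both parts at once: its \(S_0\)-trace part gives \(\kappa=-q'\rho^2/m\), and its traceless part gives \(\Pi_{S_0}^0S'=-q'H_y\). We will not need to check these separately, since \(Q'=S'+q'yy^\top\) vanishes identically by construction.

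Concretely, set \(H:=-q'(\xi_0)\,yy^\top\in\Sym(m)\) and define
\[
  S(\xi)=S_0^{1/2}\exp\!\bigl((\xi-\xi_0)\,S_0^{-1/2}HS_0^{-1/2}\bigr)S_0^{1/2}.
\]
This is the affine-invariant geodesic through \(S_0\) with initial velocity \(H\). It is smooth and lies in \(\SPD(m)\) for every \(\xi\), because the exponential of a symmetric matrix is SPD and congruence by \(S_0^{1/2}\) preserves positive definiteness. Differentiating at \(\xi_0\) gives \(S(\xi_0)=S_0\) and \(S'(\xi_0)=S_0^{1/2}(S_0^{-1/2}HS_0^{-1/2})S_0^{1/2}=H\).

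A simpler choice is the straight segment \(S(\xi)=S_0+(\xi-\xi_0)H\). It stays SPD on a small interval \(|\xi-\xi_0|<\lambda_{\min}(S_0)/\|H\|_{\rm op}\), by openness of \(\SPD(m)\) or Weyl's inequality. Either path works. The geodesic has the advantage of being defined globally.

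There is one point to address. The statement says \(q'(\xi_0)\) is prescribed, so \(q\) is an independent scalar function with that derivative, for example \(q(\xi)=q(\xi_0)+q'(\xi_0)(\xi-\xi_0)\). In the model, \(S=a^2D\) and \(q=\sigma^2(a'/a)^2\) share the factor \(a\). The construction is nonetheless realizable: fix \(a\) and \(\sigma\) to produce the given \(q\), then set \(D(\xi)=a(\xi)^{-2}S(\xi)\), which is smooth and SPD. So \(D\) is unconstrained whenever the shape is unconstrained, and I expect this compatibility check to be the only real obstacle.

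With both pieces in place, \(\partial_\xi Q|_{\xi_0}=H+q'(\xi_0)yy^\top=0\). For the ``consequently'' clause, suppose some bound \(\|Q'\|\ge c(q'(\xi_0),y)>0\) held for all admissible paths with \(q'(\xi_0)\neq0\). The path just constructed has the same \(q'(\xi_0)\) and the same \(y\), yet gives \(\|Q'\|=0\), a contradiction. Finally, as a consistency check, this path has \(\chi_{\rm sh}=\tfrac1{\sqrt2}|q'|\|H_y\|_{S_0}\), which matches \eqref{eq:shape-required-cancellation}.
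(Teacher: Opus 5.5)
Your proposal is correct and follows the paper's proof: both impose $S'(\xi_0)=-q'(\xi_0)yy^\top$ and use the affine path $S_0+(\xi-\xi_0)S'(\xi_0)$, which stays positive definite near $\xi_0$ because $\SPD(m)$ is open. The geodesic variant and the realizability check via $D=a^{-2}S$ are harmless additions that the paper does not make; in the latter, note that $q=\sigma^2(a'/a)^2\ge0$ forces $q(\xi_0)>0$ whenever $q'(\xi_0)\neq0$.
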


\begin{proof}
Choose
\[
  S'(\xi_0)=-q'(\xi_0)yy^\top.
\]
Then \(Q'(\xi_0,y)=0\).  Since \(\SPD(m)\) is open in the vector space of symmetric
matrices, the affine path
\[
  S(\xi)=S_0+(\xi-\xi_0)S'(\xi_0)
\]
remains positive definite on a sufficiently small interval around \(\xi_0\).
\end{proof}

The corollary is intentionally local.  A zero first derivative at one point does not imply
global indistinguishability of two finite latent states, nor does it exclude information in
\(Lh\), higher iterated covariations, or the complete observation history.  It says only that
the first quadratic signature can be erased by admissible shape variation.

\subsection{An intrinsic normalized visibility}

The norm \(\|Q'\|_S\) depends on the speed at which the fibre is parametrized by the chosen
slow coordinate.  We therefore normalize by the latent diffusion metric.

In the general coordinate \(\xi\), the block-diagonal latent metric is
\begin{equation}
  g
  =
  \frac{\dd\xi^2}{\sigma(\xi)^2}
  +
  \dd z^\top D(\xi)^{-1}\dd z.
  \label{eq:latent-metric-general-coordinate}
\end{equation}
Along the fibre \(y=a(\xi)z\), a tangent corresponding to unit change in \(\xi\) is
\[
  V
  =
  \partial_\xi
  -
  \frac{a'}{a}z^i\partial_{z^i}.
\]
Using \(S=a^2D\) and \(\rho^2=y^\top S^{-1}y\),
\begin{equation}
  \|V\|_g^2
  =
  \frac{1}{\sigma^2}
  +
  \left(\frac{a'}{a}\right)^2\rho^2
  =
  \frac{1+q\rho^2}{\sigma^2}.
  \label{eq:vertical-vector-norm}
\end{equation}
Hence the unit vertical vector is
\[
  U
  =
  \frac{\sigma}{\sqrt{1+q\rho^2}}\,V,
\]
and the normalized vertical quadratic visibility is
\begin{equation}
  \mathfrak v_Q
  :=
  \|d^{\rm v}Q(U)\|_S
  =
  \frac{\sigma}{\sqrt{1+q\rho^2}}
  \|Q'\|_S .
  \label{eq:intrinsic-visibility}
\end{equation}
In Lamperti coordinate \(\sigma=1\).  Unlike \(\|Q'\|_S\) alone,
\(\mathfrak v_Q\) is invariant under a smooth reparametrization of the one-dimensional slow
coordinate.

Combining \eqref{eq:intrinsic-visibility} with
\eqref{eq:shape-lower-bound-sharp} gives an intrinsic sufficient lower bound for local
quadratic visibility.  This quantity will also make explicit, in
\cref{sec:resolvability}, the distinction between structural visibility and finite-sample
resolvability.

\subsection{Two-dimensional interference geometry}

For coherent I/Q data, let \(m=2\) and write, in the instantaneous principal-axis frame,
\begin{equation}
  S
  =
  s\,
  R_\theta
  \begin{pmatrix}
  e^\eta&0\\
  0&e^{-\eta}
  \end{pmatrix}
  R_\theta^\top.
  \label{eq:S-2D-shape-section}
\end{equation}
As in \cref{sec:shape-geometry},
\[
  \kappa=\frac{s'}{s},
  \qquad
  \bm c
  =
  \bigl(\eta',\,2\theta'\sinh\eta\bigr),
  \qquad
  \|\bm c\|=\chi_{\rm sh}.
\]
Let
\[
  u=S^{-1/2}y
  =
  \rho(\cos\phi,\sin\phi),
  \qquad
  \delta=\frac{q'\rho^2}{2},
\]
and
\[
  \bm n_\phi=(\cos2\phi,\sin2\phi).
\]
Then the whitened vertical derivative
\[
  W=S^{-1/2}Q'S^{-1/2}
\]
satisfies
\begin{equation}
  \|W\|_F^2
  =
  2(\kappa+\delta)^2
  +
  2\|\bm c+\delta\bm n_\phi\|^2.
  \label{eq:2D-interference}
\end{equation}

Equation \eqref{eq:2D-interference} turns shape confounding into a planar vector-addition
problem.  If \(\bm c\) is aligned with \(+\bm n_\phi\), shape change reinforces the
rank-one signature.  If it is orthogonal, the two contributions add in quadrature.  If
\(\bm c=-\delta\bm n_\phi\), the traceless anisotropic contribution cancels exactly.
Complete cancellation further requires
\begin{equation}
  \kappa=-\delta.
  \label{eq:2D-full-cancellation}
\end{equation}

For the dimensionless local benchmark
\[
  \rho^2=1,\qquad q'=1,\qquad \delta=\frac12,
\]
the five cases
\[
  \text{fixed shape},\quad
  \text{reinforcement},\quad
  \text{orthogonal},\quad
  \text{anisotropic cancellation},\quad
  \text{complete cancellation}
\]
give respectively
\begin{equation}
  \|W\|_F
  =
  1,\quad
  \sqrt{\frac52},\quad
  \sqrt{\frac32},\quad
  \frac1{\sqrt2},\quad
  0.
  \label{eq:2D-benchmark-values}
\end{equation}
These values will be used as exact checks in the numerical section.

\subsection{Interpretation}

For fixed normalized shape, one global whitening transformation separates scale from the
rank-one directional term, and the spectral gap cannot be cancelled by scalar scale variation.
When the normalized shape varies, its traceless derivative can interfere with that term and,
under the conditions of \eqref{eq:exact-cancellation}, cancel it exactly.  These statements
concern the observed quadratic tensor only; information may still be present in the drift,
higher stochastic characteristics or the full observation history.  The next section turns to
the separate issue of whether a structurally present quadratic signature can be resolved from
finite-rate noisy data.

\section{Radar resolvability under finite sampling}
\label{sec:resolvability}

The preceding theorems concern an ideal continuous experiment in which the quadratic
variation of the coherent return is available without receiver corruption. A radar instead
provides finite-rate samples through a noisy, bandwidth-limited receiver. Structural
observability must therefore be separated from the temporal and noise resolution of a local
covariance estimate.

\subsection{Local covariance and finite sampling}

Let
\begin{equation}
  \dd Y_t=\beta_t\dd t+G_t\dd W_t,
  \qquad Q_t=G_tG_t^\top.
  \label{eq:observed-local-SDE}
\end{equation}
At sampling interval \(\Delta\), with \(K\) increments in a window \(h=K\Delta\), consider
\begin{equation}
  \widehat Q_{t,h}
  =
  \frac1{K\Delta}
  \sum_{k=0}^{K-1}\Delta_kY\,\Delta_kY^\top,
  \label{eq:local-realized-Q}
\end{equation}
whose dynamic target is
\begin{equation}
  \overline Q_{t,h}
  =
  \frac1h\int_t^{t+h}Q_s\,\dd s.
  \label{eq:window-average-Q}
\end{equation}
A useful local regime is
\begin{equation}
  \Delta\ll h\ll\tau_Q,
  \label{eq:time-scale-separation}
\end{equation}
where \(\tau_Q\) is the shortest relevant time scale of \(Q_t\). For Field, a conservative
choice is \(h\ll\min(A^{-1},2/B)\). If \(\widehat Q=Q+E\), Weyl's inequality gives
\begin{equation}
  |\widehat\delta_Q-\delta_Q|
  \le2\|E\|_{\rm op}.
  \label{eq:weyl-gap-resolution}
\end{equation}

\subsection{Independent sample noise}

For
\begin{equation}
  Y_k^{\rm obs}=Y_{t_k}+\varepsilon_k,
  \qquad
  \varepsilon_k\sim\mathcal N(0,R_n)\quad\text{iid},
  \label{eq:A0-observation}
\end{equation}
the differenced noise is MA(1), and the naive covariance contains
\begin{equation}
  Q_{t_k}+\frac{2R_n}{\Delta}+O(\Delta).
  \label{eq:A0-divergence}
\end{equation}
The \(1/\Delta\) divergence is specific to iid per-sample noise.

For isotropic \(R_n=nI_2\), the leading bias is isotropic but the gap variance increases.

\begin{proposition}[Frozen Gaussian variance of the raw gap estimator]
\label{prop:A0-gap-variance}
Assume that \(Q\) is constant over the window and diagonal in a fixed
radial--tangential frame, with eigenvalues \(\lambda_\rho,\lambda_\theta\). For
\begin{equation}
  \widehat\delta
  =
  \frac1{K\Delta}
  \sum_{k=0}^{K-1}
  \left[
  (\Delta Y^{\rm obs}_{\rho,k})^2
  -
  (\Delta Y^{\rm obs}_{\theta,k})^2
  \right],
  \label{eq:raw-gap-estimator}
\end{equation}
let \(c_j=\lambda_j\Delta+2n\). Then
\begin{equation}
  \Var(\widehat\delta)
  =
  \frac{2}{K^2\Delta^2}
  \left[
  K(c_\rho^2+c_\theta^2)+4(K-1)n^2
  \right].
  \label{eq:A0-gap-var-exact}
\end{equation}
For \(K=h/\Delta\gg1\),
\begin{equation}
  \Var(\widehat\delta)
  \simeq
  \frac2h
  \left[
  \frac{12n^2}{\Delta}
  +4n(\lambda_\rho+\lambda_\theta)
  +(\lambda_\rho^2+\lambda_\theta^2)\Delta
  \right],
  \label{eq:A0-gap-var-asymptotic}
\end{equation}
whose variance-optimal interval at fixed \(h\) is
\begin{equation}
  \Delta_\star
  =
  \frac{2\sqrt3\,n}
  {\sqrt{\lambda_\rho^2+\lambda_\theta^2}}.
  \label{eq:A0-delta-star}
\end{equation}
\end{proposition}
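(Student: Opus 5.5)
The plan is a direct second-moment computation for Gaussian quadratic forms, with the increments written in the fixed radial--tangential frame. Under the frozen assumption I take the drift in \eqref{eq:observed-local-SDE} to be negligible over the window, or absent, so the only contribution that matters is the martingale part; this is exactly the idealization the statement encodes. Then
\[
  \Delta Y^{\rm obs}_{j,k}=\sqrt{\lambda_j}\,\Delta_k W_j+\varepsilon_{j,k+1}-\varepsilon_{j,k},
  \qquad j\in\{\rho,\theta\}.
\]
Because \(R_n=nI_2\) is isotropic, its components in the rotated frame are again iid \(\mathcal N(0,n)\) and independent across \(j\). The Brownian components are also independent across \(j\), since \(Q\) is diagonal in this frame. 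So the two sequences \(X_{j,k}:=\Delta Y^{\rm obs}_{j,k}\) are independent, centred, jointly Gaussian MA(1)-type sequences. Their covariance structure is
\[
  \Var(X_{j,k})=\lambda_j\Delta+2n=c_j,\qquad
  \Cov(X_{j,k},X_{j,k+1})=-n,
\]
and all other lags vanish. The Brownian increments are independent across \(k\); only the shared noise sample \(\varepsilon_{j,k+1}\) couples neighbouring increments.

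Next I would use the Isserlis identity \(\Cov(X^2,X'^2)=2\Cov(X,X')^2\) for centred jointly Gaussian pairs. This gives
\[
  \Var\Bigl(\sum_{k=0}^{K-1}X_{j,k}^2\Bigr)=2\bigl[Kc_j^2+2(K-1)n^2\bigr],
\]
where the \(2(K-1)\) counts the ordered adjacent pairs. By independence across \(j\), the variance of the difference of the two sums is the sum of the two variances. Dividing by \((K\Delta)^2\) then yields \eqref{eq:A0-gap-var-exact}.

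For the asymptotic form I would expand \(c_j^2=\lambda_j^2\Delta^2+4n\lambda_j\Delta+4n^2\) and replace \(K-1\) by \(K\). The bracket becomes
\[
  K\bigl[12n^2+4n(\lambda_\rho+\lambda_\theta)\Delta+(\lambda_\rho^2+\lambda_\theta^2)\Delta^2\bigr],
\]
where the \(12n^2\) combines \(8n^2\) from the diagonal terms and \(4n^2\) from the lag-one terms. Multiplying by \(2/(K^2\Delta^2)=2/(hK\Delta)\), with \(K\Delta=h\), gives \eqref{eq:A0-gap-var-asymptotic}. At fixed \(h\), the middle term does not depend on \(\Delta\). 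Minimizing \(12n^2/\Delta+(\lambda_\rho^2+\lambda_\theta^2)\Delta\) is then elementary calculus: setting the derivative to zero gives \(\Delta^2=12n^2/(\lambda_\rho^2+\lambda_\theta^2)\), which is \eqref{eq:A0-delta-star}. This is a minimum because the function is convex in \(\Delta>0\).

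The main obstacle is the careful bookkeeping of the MA(1) cross terms. One must check that lag-one covariances appear only within each component and never between the \(\rho\) and \(\theta\) components. One must also confirm that they enter with squared sign, so the \(-n\) becomes \(+n^2\), and that their count is \(K-1\) rather than \(K\). A secondary point to state explicitly is that the drift and within-window variation of \(Q\) are excluded by the frozen hypothesis, so the formula is exact only under that idealization.
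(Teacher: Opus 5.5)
Your proposal is correct and follows essentially the same route as the paper's appendix derivation: the MA(1) structure of the differenced noise with lag-one covariance $-n$, the Gaussian identity $\Cov(X^2,Y^2)=2\Cov(X,Y)^2$ giving $2Kc_j^2+4(K-1)n^2$ per component, independence of the radial and transverse components under isotropic noise, and then the $K\gg1$ expansion (with $12n^2=8n^2+4n^2$) followed by elementary minimization in $\Delta$. You also state explicitly that the drift is neglected under the frozen hypothesis, which the paper leaves implicit when it treats the increments as centred.
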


The derivation is given in \cref{app:numerical-protocol}. This optimum is specific to the
frozen local model. For constant \(B\), an OU calculation gives the complementary
localization bias
\begin{equation}
  \operatorname{Bias}_h
  =
  \frac{A}{2\alpha}
  (1-|\gamma_0|^2)
  \left[
  1-\frac{1-e^{-Bh}}{Bh}
  \right]
  =
  \frac{ABh}{4\alpha}(1-|\gamma_0|^2)+O(h^2).
  \label{eq:field-window-bias}
\end{equation}

\subsection{Correlated and anisotropic receiver noise}
\label{subsec:correlated-receiver}

Let
\begin{equation}
  C_\ell=\E[\varepsilon_{k+\ell}\varepsilon_k^\top].
  \label{eq:receiver-C-lag}
\end{equation}
Then
\begin{equation}
  N_\ell
  :=
  \E[\eta_{k+\ell}\eta_k^\top]
  =
  2C_\ell-C_{\ell+1}-C_{\ell-1}.
  \label{eq:receiver-increment-cov}
\end{equation}
For a vector AR(1) receiver with parameter \(\rho\) and stationary covariance \(R_0\),
\begin{align}
  N_0&=2(1-\rho)R_0,
  \label{eq:AR1-N0}\\
  N_\ell&=-(1-\rho)^2\rho^{\ell-1}R_0,\qquad \ell\ge1.
  \label{eq:AR1-Nlag}
\end{align}

Write the physical and receiver anisotropies as
\[
  Q_{\rm phys}
  =
  \bar\lambda I_2+\frac\delta2A(\phi),
  \qquad
  \frac{N_0}{\Delta}
  =
  \nu I_2+\frac\beta2A(\phi_n).
\]
Then
\begin{equation}
  \delta_{\rm meas}^2
  =
  \delta^2+\beta^2
  +2\delta\beta\cos2(\phi_n-\phi).
  \label{eq:instrument-gap-vector-law}
\end{equation}
Receiver anisotropy can therefore reinforce, rotate or cancel the physical gap, making I/Q
calibration structurally relevant
\citep{ChengEtAl2014IQImbalance,Cardillo2026IQCalibration}.

If the continuous-time receiver variogram satisfies
\(V_\varepsilon(\Delta)\sim K\Delta^\beta\), then
\(V_\varepsilon(\Delta)/\Delta\) diverges for \(\beta<1\), tends to a finite instrumental
quadratic variation for \(\beta=1\), and vanishes for \(\beta>1\). In the critical case,
quadratic variation identifies only \(Q_{\rm sea}+Q_{\rm rec}\) without calibration.

Pre-averaging and realized-kernel methods address such noise contamination
\citep{JacodEtAl2009Preaveraging,BarndorffNielsenEtAl2008RealizedKernels,BarndorffNielsenEtAl2011MultivariateKernels,Bibinger2011Covariance,ParkHongLinton2016FourierKernel}.
Here they are used only as benchmarks because the local target itself changes over the
smoothing window. Consequently, geometric observability does not by itself imply either
finite-sample resolvability or stable latent reconstruction.

\section{Controlled numerical benchmarks}
\label{sec:numerics}

The computations test the analytic identities and their finite-sample conditioning; they do
not constitute validation against historical radar data, which are unavailable here. The
canonical Field simulations use exact CIR and OU transitions.

\subsection{Geometric identities and an alternative texture model}

For a non-diagonal fixed \(D_0\), the generalized spectral gap agrees with
\[
  \Delta(\xi;y)=q(\xi)y^\top D_0^{-1}y
\]
to machine precision. Inverting \(q\) gives
\[
  \max|\widehat\xi-\xi|=7.6\times10^{-15},
\]
while a perturbed-covariance test produces a latent error \(2.36\times10^{-3}\), below the
deterministic bound \(3.98\times10^{-3}\).

The two-dimensional shape-changing benchmark reproduces the exact values in
\cref{tab:shape-benchmark}.

\begin{table}[t]
\centering
\caption{Exact two-dimensional shape-interference benchmark.}
\label{tab:shape-benchmark}
\begin{tabular}{lc}
\toprule
Configuration & \(\|W\|_F\)\\
\midrule
Fixed shape & \(1.000000\)\\
Reinforcement & \(1.581139=\sqrt{5/2}\)\\
Orthogonal shape variation & \(1.224745=\sqrt{3/2}\)\\
Anisotropic cancellation & \(0.707107=1/\sqrt2\)\\
Complete cancellation & \(0\)\\
\bottomrule
\end{tabular}
\end{table}

\begin{figure}[t]
  \centering
  \includegraphics[width=0.68\linewidth]{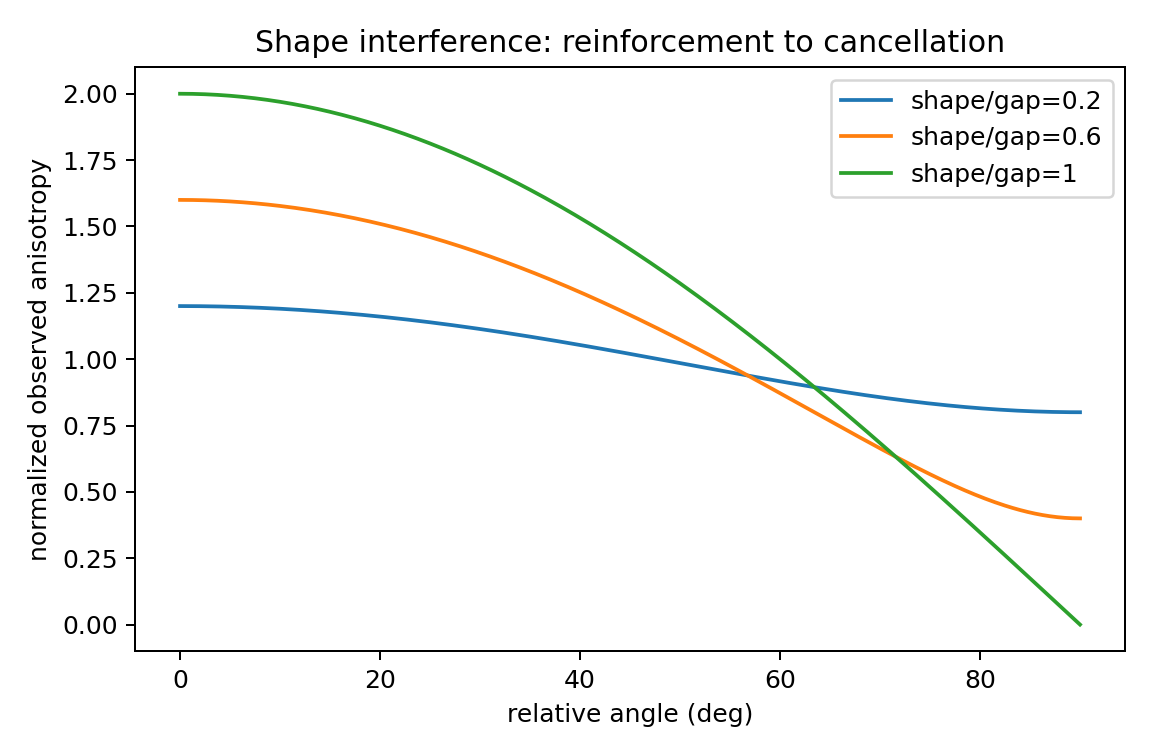}
  \caption{Two-dimensional shape interference. Relative orientation can reinforce or suppress
  the observed anisotropy, with exact cancellation in the anti-aligned case.}
  \label{fig:shape-interference}
\end{figure}

For the same observation \(a(x)=\sqrt x\), CIR texture gives
\(q_{\rm CIR}=A/(2\alpha x)\), whereas the log-OU model gives
\(q_{\log{\rm OU}}=Av/2\). The implementation reproduces both expressions, confirming that
gap recovery relies on the slow diffusion symbol rather than on multiplicative observation
alone.

\subsection{Finite PRF and dynamic localization}

The frozen A0 benchmark uses
\[
  \lambda_\theta=1,\qquad
  \lambda_\rho=1.5,\qquad
  n=10^{-3},\qquad h=0.2,
\]
with \(30\,000\) Monte Carlo realizations. Formula \eqref{eq:A0-delta-star} predicts
\(\Delta_\star=1.9215\times10^{-3}\), and theoretical and Monte Carlo standard deviations
agree to about one percent over the tested grid.

\begin{figure}[t]
  \centering
  \includegraphics[width=0.70\linewidth]{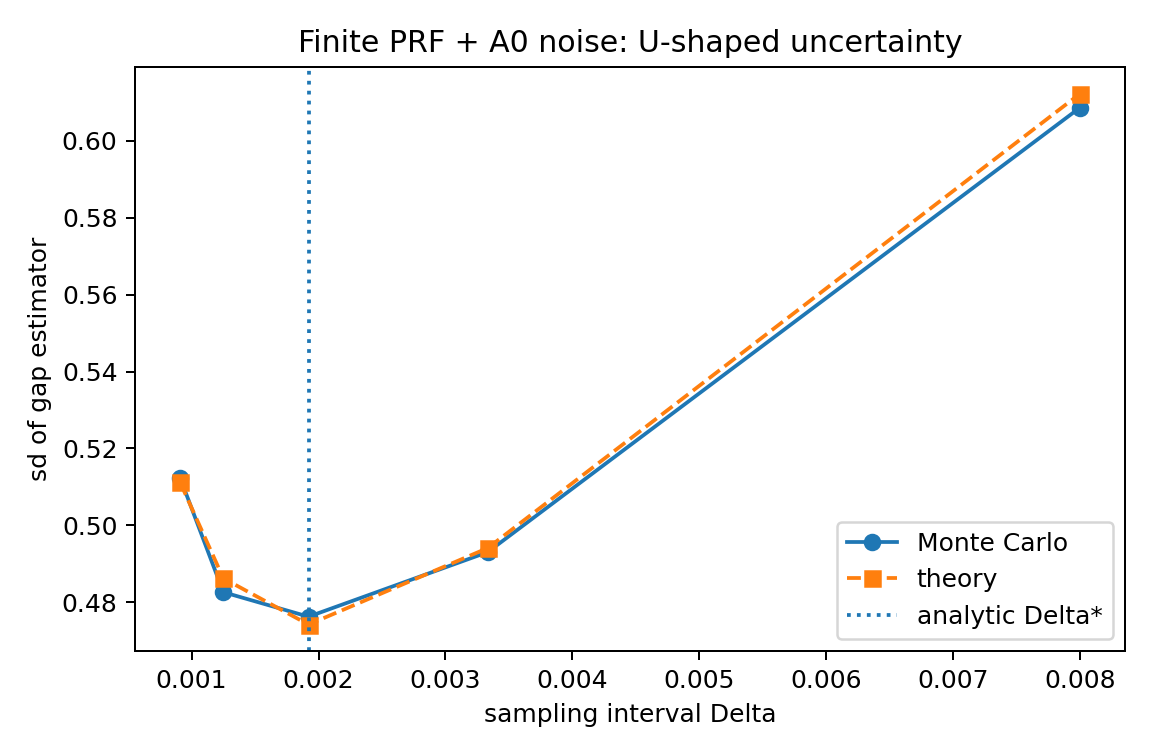}
  \caption{Frozen A0 benchmark. At fixed window \(h\), the uncertainty is U-shaped in
  \(\Delta\), with the analytic optimum marked.}
  \label{fig:A0-prf}
\end{figure}

The full dynamic Field test uses
\[
  A=1,\quad B=4,\quad \alpha=4,\quad
  x_0=1,\quad \gamma_0=(1,0.25),\quad
  \Delta=10^{-3},
\]
and \(12\,000\) exact CIR--OU trajectories.

\begin{table}[t]
\centering
\caption{Dynamic Field benchmark.}
\label{tab:dynamic-field}
\begin{tabular}{rrrrr}
\toprule
\(h\) & \(K\) & mean \(\widehat\delta\) &
mean exact gap & sd\((\widehat\delta)\)\\
\midrule
0.02 & 20  & 0.1427 & 0.1323 & 0.9281\\
0.05 & 50  & 0.1413 & 0.1321 & 0.5932\\
0.10 & 100 & 0.1370 & 0.1317 & 0.4196\\
0.20 & 200 & 0.1342 & 0.1313 & 0.3025\\
\bottomrule
\end{tabular}
\end{table}

The trajectory-level correlations are \(0.008\), \(0.046\), \(0.117\) and \(0.197\).
Thus the ideal quadratic tensor is informative while the raw local estimator remains weak on
individual trajectories.

\subsection{Receiver correlation and robustification crossover}

For the AR(1) receiver with \(\rho=0.6\), empirical increment-noise covariances agree with
\eqref{eq:AR1-N0}--\eqref{eq:AR1-Nlag} at relative Frobenius errors
\(0.19\%\), \(0.50\%\), \(2.01\%\) and \(2.71\%\) for lags \(0,\ldots,3\).

We compare the raw estimator with overlapping pre-averaging under iid noise
\citep{JacodEtAl2009Preaveraging} and a local Parzen realized kernel under correlated noise
\citep{BarndorffNielsenEtAl2008RealizedKernels,BarndorffNielsenEtAl2011MultivariateKernels}.
The tuning values in \cref{tab:noise-crossover} are selected \emph{ex post} from finite grids.

\begin{table}[t]
\centering
\caption{Finite-sample noise crossover.}
\label{tab:noise-crossover}
\begin{tabular}{lrrrr}
\toprule
Model & Noise scale & Raw RMSE & Best robust RMSE & Tuning\\
\midrule
iid & \(2.5\times10^{-4}\) & 0.381 & 0.464 & \(k=5\)\\
iid & \(1.0\times10^{-3}\) & 0.615 & 0.514 & \(k=5\)\\
iid & \(4.0\times10^{-3}\) & 1.705 & 0.733 & \(k=7\)\\
iid & \(1.0\times10^{-2}\) & 3.675 & 0.921 & \(k=10\)\\
\midrule
AR(1) & \(0.25\,R_0\) & 0.307 & 0.326 & \(H=1\)\\
AR(1) & \(1\,R_0\)    & 0.358 & 0.369 & \(H=1\)\\
AR(1) & \(4\,R_0\)    & 0.776 & 0.674 & \(H=3\)\\
AR(1) & \(10\,R_0\)   & 1.766 & 0.923 & \(H=10\)\\
AR(1) & \(25\,R_0\)   & 4.228 & 1.200 & \(H=20\)\\
\bottomrule
\end{tabular}
\end{table}

\begin{figure}[t]
  \centering
  \includegraphics[width=0.70\linewidth]{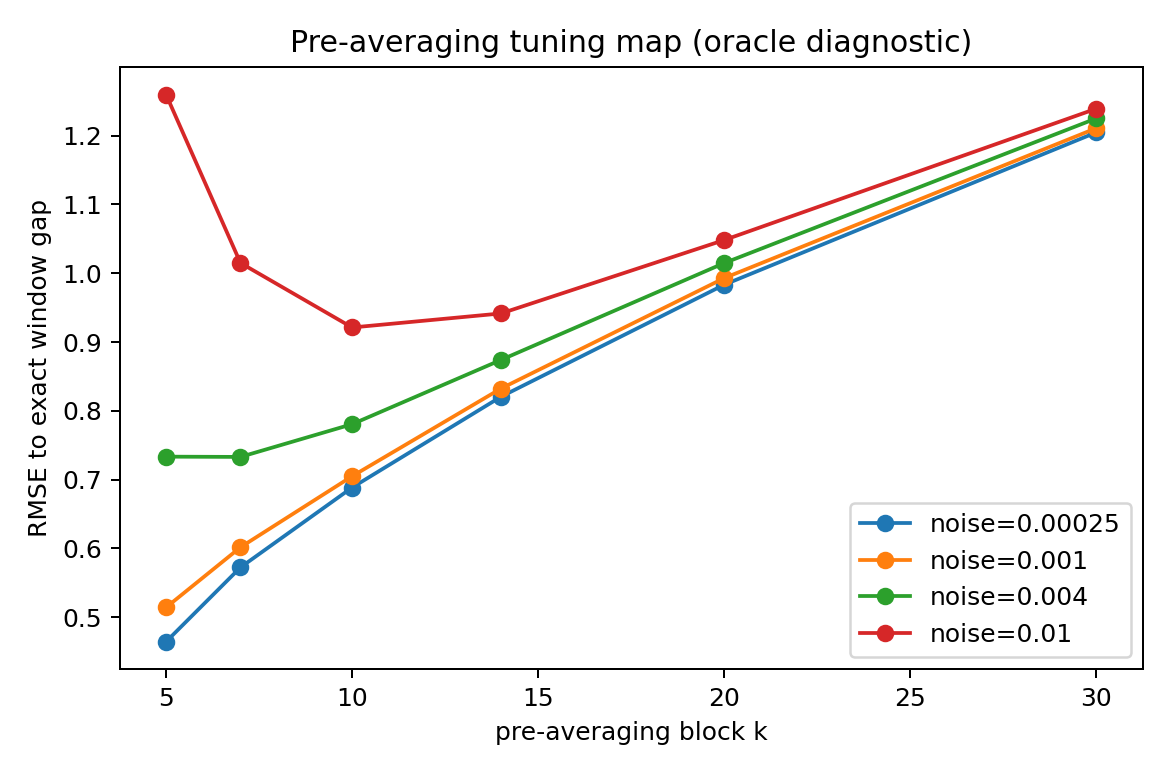}
  \caption{Pre-averaging crossover in the dynamic Field benchmark. Smoothing is detrimental
  at the lowest noise level but becomes advantageous as receiver noise increases.}
  \label{fig:preaverage-crossover}
\end{figure}

There is no uniform finite-sample dominance of the noise-robust estimators in this local
dynamic regime. The outcome depends jointly on noise level, localization window and
smoothing scale.

\section{Conclusion}
\label{sec:conclusion}

This paper started from a simple mismatch between the latent model and the radar
measurement.  In texture--coherence diffusions, the physically useful slow and fast
variables are not observed separately: a coherent radar sees only their multiplicative
combination \(Y=a(r)Z\).  The instantaneous observation therefore defines a fibre of latent
states rather than a unique state.  The central question was whether the stochastic
evolution of the observed field can recover information that is absent from a single
instantaneous sample.

The answer is conditional.  In the fixed-shape class
\(D(r)=d(r)D_0\), one global whitening separates the fast covariance scale from the
rank-one contribution generated by the slow variable.  The resulting generalized spectral
gap,
\[
  \Delta(r;y)=q(r)\,y^\top D_0^{-1}y,
\]
provides an exact recovery mechanism whenever the pair \((s,q)\) is injective along the
observation fibre, and \(q\) alone is sufficient whenever it is injective.  Field's CIR--OU
model belongs to this favourable class: away from zero observation, its texture is recovered
from the radial--transverse gap independently of the positive fast scale \(B(x)\).
The log-OU example shows that this is not a generic consequence of multiplicative
observation; it depends on the scaling of the slow diffusion symbol.

Allowing the fast covariance to change shape modifies the picture qualitatively.  The
vertical variation of the observed covariance splits orthogonally into scale and traceless
shape components.  This yields both a lower bound on local quadratic visibility and exact
conditions under which the shape variation cancels the rank-one slow signature.  Hence
fixed shape is a protected class, whereas unconstrained shape change admits local
first-order cancellation.  These statements concern the quadratic tensor only: they do not
exclude information carried by the drift, higher stochastic characteristics or the complete
observation history.

The acquisition analysis adds a second qualification. Structural observability in the
continuous noiseless experiment does not imply finite-sample resolvability. A local radar
estimate must balance sampling variance against the evolution of the physical covariance
within the localization window, and receiver anisotropy can reinforce, rotate or cancel the
same spectral gap used for latent recovery. The controlled numerical benchmarks confirm
the exact geometric identities and the finite-PRF variance formula, but also show that
trajectory-level reconstruction can remain poorly conditioned and that noise-robust
smoothing does not dominate uniformly at finite sample size. In practice, identification of
a latent coordinate, resolution of its quadratic signature and stable inversion are separate
questions. The distinction remains relevant when the Field benchmark is replaced by more
complete sea-clutter or receiver models.

\appendix
\section{Tensorial details and supplementary proofs}
\label{app:tensorial-proofs}

\subsection{Intrinsic vertical derivative}

Let \(h:M\to N\) be a submersion and \(Q\) a section of
\(h^\ast\Sym^2(TN)\).  For \(v\in\ker Dh_x\), choose a curve
\(c(s)\subset h^{-1}(h(x))\) with \(c(0)=x\) and \(\dot c(0)=v\).
Since \(h(c(s))\) is constant, all \(Q_{c(s)}\) lie in the same vector
space \(\Sym^2(T_{h(x)}N)\).  Hence
\[
  d^{\rm v}Q_x(v)
  =
  \left.\frac{\dd}{\dd s}Q_{c(s)}\right|_{s=0}
\]
is defined without choosing a connection on \(N\).

If the fibres are connected, \(d^{\rm v}Q\equiv0\) if and only if \(Q\)
is constant on each fibre and therefore basic.  This proves
\cref{prop:vertical-basic}.

\subsection{Coordinate invariance of the scalar coefficient \texorpdfstring{\(q\)}{q}}

Suppose \(\eta=f(\xi)\) is a smooth monotone reparametrization of the slow coordinate.
The transformed diffusion coefficient is
\[
  \sigma_\eta(\eta)
  =
  f'(\xi)\sigma_\xi(\xi),
\]
whereas
\[
  \frac{\partial_\eta a}{a}
  =
  \frac{1}{f'(\xi)}
  \frac{\partial_\xi a}{a}.
\]
Therefore
\[
  \sigma_\eta^2
  \left(\frac{\partial_\eta a}{a}\right)^2
  =
  \sigma_\xi^2
  \left(\frac{\partial_\xi a}{a}\right)^2,
\]
so the coefficient \(q\) in \eqref{eq:s-q-general} is a scalar function of the latent
state rather than an artifact of the chosen one-dimensional coordinate.

\subsection{Affine-relative scale--shape decomposition}

For \(S\in\SPD(m)\), define
\[
  \langle H,K\rangle_S
  =
  \tr(S^{-1}HS^{-1}K).
\]
Under a nonsingular linear change of observed coordinates,
\[
  S\mapsto ASA^\top,\qquad
  H\mapsto AHA^\top,
\]
one has
\[
  \tr\!\left[
  (ASA^\top)^{-1}(AHA^\top)
  (ASA^\top)^{-1}(AKA^\top)
  \right]
  =
  \tr(S^{-1}HS^{-1}K),
\]
so this inner product is congruence invariant.

The decomposition
\[
  H
  =
  \frac{\tr(S^{-1}H)}{m}S
  +
  \Pi_S^0H
\]
is orthogonal because
\[
  \left\langle
  S,\Pi_S^0H
  \right\rangle_S
  =
  \tr(S^{-1}\Pi_S^0H)
  =
  0.
\]
Applying the decomposition to \(S'\) and \(yy^\top\) gives
\cref{eq:shape-tensor-decomposition}.

\subsection{Norm of the directional traceless tensor}

Let
\[
  u=S^{-1/2}y,
  \qquad
  \rho^2=u^\top u.
\]
Then
\[
  S^{-1/2}H_yS^{-1/2}
  =
  uu^\top-\frac{\rho^2}{m}I.
\]
Using
\[
  (uu^\top)^2=\rho^2uu^\top
\]
gives
\begin{align*}
  \|H_y\|_S^2
  &=
  \tr\left[
  \left(
  uu^\top-\frac{\rho^2}{m}I
  \right)^2
  \right]\\
  &=
  \rho^4
  -\frac{2\rho^4}{m}
  +\frac{\rho^4}{m}\\
  &=
  \rho^4\frac{m-1}{m},
\end{align*}
which proves \eqref{eq:Hy-norm}.

\subsection{Unit vertical direction}

In the general slow coordinate \(\xi\), the block-diagonal diffusion metric is
\[
  g
  =
  \frac{\dd\xi^2}{\sigma(\xi)^2}
  +
  \dd z^\top D(\xi)^{-1}\dd z.
\]
Along a fibre \(y=a(\xi)z\),
\[
  V
  =
  \partial_\xi-\frac{a'}{a}z^i\partial_{z^i}.
\]
Since
\[
  z^\top D^{-1}z
  =
  y^\top S^{-1}y
  =
  \rho^2,
\]
one obtains
\[
  \|V\|_g^2
  =
  \frac{1}{\sigma^2}
  +
  \left(\frac{a'}{a}\right)^2\rho^2
  =
  \frac{1+q\rho^2}{\sigma^2}.
\]
Thus
\[
  U
  =
  \frac{\sigma}{\sqrt{1+q\rho^2}}V
\]
is the unit vertical direction and
\[
  \|d^{\rm v}Q(U)\|_S
  =
  \frac{\sigma}{\sqrt{1+q\rho^2}}\|Q'\|_S,
\]
as used in \eqref{eq:intrinsic-visibility}.

\subsection{Two-dimensional form}

For \(m=2\), write
\[
  S
  =
  sR_\theta
  \diag(e^\eta,e^{-\eta})
  R_\theta^\top.
\]
In the instantaneous eigenbasis,
\[
  S^{-1/2}S'S^{-1/2}
  =
  \frac{s'}{s}I
  +
  \begin{pmatrix}
  \eta'&2\theta'\sinh\eta\\
  2\theta'\sinh\eta&-\eta'
  \end{pmatrix}.
\]
If
\[
  u=S^{-1/2}y
  =
  \rho(\cos\phi,\sin\phi),
\]
then
\[
  uu^\top-\frac{\rho^2}{2}I
  =
  \frac{\rho^2}{2}
  \begin{pmatrix}
  \cos2\phi&\sin2\phi\\
  \sin2\phi&-\cos2\phi
  \end{pmatrix}.
\]
Substitution into the whitened derivative
\[
  W
  =
  S^{-1/2}Q'S^{-1/2}
\]
gives \eqref{eq:2D-interference}.

\section{Acquisition formulas and numerical protocol}
\label{app:numerical-protocol}

\subsection{Exact transitions used in the Field benchmark}

The dynamic Field simulations use exact one-step transitions.  For the CIR texture
\[
  \dd x_t=A(1-x_t)\dd t+
  \sqrt{\frac{2A}{\alpha}x_t}\,\dd W_t,
\]
the transition over \(\Delta\) is sampled through the standard non-central chi-square law.
Each OU quadrature is propagated exactly as
\[
  \gamma_{k+1}
  =
  e^{-B\Delta/2}\gamma_k
  +
  \sqrt{\frac{1-e^{-B\Delta}}{2}}\,\xi_k,
  \qquad
  \xi_k\sim\mathcal N(0,1).
\]
Euler discretization is therefore not used for the canonical numerical benchmark.

\subsection{Derivation of the frozen A0 gap variance}

Under the assumptions of \cref{prop:A0-gap-variance}, let
\[
  X_{\rho,k}
  =
  \Delta Y_{\rho,k}^{\rm obs},
  \qquad
  X_{\theta,k}
  =
  \Delta Y_{\theta,k}^{\rm obs}.
\]
For either component \(j\in\{\rho,\theta\}\),
\[
  \Var(X_{j,k})=c_j=\lambda_j\Delta+2n.
\]
The physical Brownian increments are independent across \(k\), whereas the differenced
sample noise has
\[
  \Cov(X_{j,k+1},X_{j,k})=-n
\]
and zero covariance beyond lag one.  For centered jointly Gaussian variables,
\[
  \Cov(X^2,Y^2)=2\Cov(X,Y)^2.
\]
Hence
\begin{align}
  \Var\left(\sum_{k=0}^{K-1}X_{j,k}^2\right)
  &=
  2Kc_j^2+4(K-1)n^2.
  \label{eq:app-single-component-square-var}
\end{align}
The radial and transverse components are independent under isotropic receiver noise and the
frozen diagonal physical covariance, so
\[
  \Var(\widehat\delta)
  =
  \frac{1}{K^2\Delta^2}
  \left[
  2K(c_\rho^2+c_\theta^2)
  +
  8(K-1)n^2
  \right],
\]
which is \eqref{eq:A0-gap-var-exact}.

With \(K=h/\Delta\gg1\),
\begin{align*}
  c_\rho^2+c_\theta^2
  &=
  (\lambda_\rho^2+\lambda_\theta^2)\Delta^2
  +
  4n(\lambda_\rho+\lambda_\theta)\Delta
  +
  8n^2.
\end{align*}
After including the lag-one term, the coefficient of \(n^2\) becomes \(12\), yielding
\eqref{eq:A0-gap-var-asymptotic}.  Differentiation with respect to \(\Delta\) at fixed \(h\)
gives
\[
  \Delta_\star
  =
  \frac{2\sqrt3\,n}
  {\sqrt{\lambda_\rho^2+\lambda_\theta^2}}.
\]

\subsection{Correlated receiver increments}

Let
\[
  C_\ell
  =
  \E[\varepsilon_{k+\ell}\varepsilon_k^\top]
\]
and
\[
  \eta_k=\varepsilon_{k+1}-\varepsilon_k.
\]
Then
\begin{align*}
  \E[\eta_{k+\ell}\eta_k^\top]
  &=
  \E[
  (\varepsilon_{k+\ell+1}-\varepsilon_{k+\ell})
  (\varepsilon_{k+1}-\varepsilon_k)^\top]\\
  &=
  C_\ell-C_{\ell+1}-C_{\ell-1}+C_\ell,
\end{align*}
which proves \eqref{eq:receiver-increment-cov}.  For
\(C_\ell=\rho^{|\ell|}R_0\), one obtains
\eqref{eq:AR1-N0}--\eqref{eq:AR1-Nlag}.

\subsection{Reproducibility record}

For each numerical statement reported in the main text, the archived scripts record:
\begin{itemize}
\item dimensionless parameters \(A\Delta\), \(B\Delta/2\), \(\alpha\), local window \(h\),
      and receiver-noise scale;
\item random seed and Monte Carlo count;
\item estimator definition and tuning rule;
\item whether tuning is operational or oracle/ex-post;
\item target quantity, distinguishing instantaneous \(Q_t\) from the exact window average;
\item Monte Carlo uncertainty or a directly reproducible summary statistic.
\end{itemize}

The numerical programme uses controlled alternative models because the historical radar data
are unavailable. These include non-CIR texture, same-symbol drift misspecification,
temporally correlated receiver noise and anisotropic I/Q contamination. The calculations test
the structural results and their range of validity; they are not presented as experimental
validation.


\section*{Acknowledgements}
The authors sincerely thank Ismaël Bailleul for kindly providing the arXiv endorsement that made the dissemination of this work possible.

\bibliographystyle{plainnat}
\bibliography{references}

\end{document}